\newtheorem{lem}{Lemma}[section]
\newtheorem{thm}[lem]{Theorem}
\newtheorem{rem}[lem]{Remark}
\newtheorem{defin}[lem]{Definition}
\newtheorem{cor}[lem]{Corollary}
\newtheorem{example}[lem]{Example}
\newtheorem{prop}[lem]{Proposition}
\def \NN{\mathbb{N}}
\def \RR{\mathbb{R}}
\def \Rd{{\RR^d}}
\newcommand{\ind}{{\bf 1 }}
\title[Sharp Gaussian estimates]{Sharp Gaussian estimates for Schr{\"o}dinger heat kernels: $L^p$ integrability conditions}
\subjclass[2010]{Primary 47D06, 47D08; Secondary 35A08, 35B25}
\author{Krzysztof Bogdan}
 \address{Wroc{\l}aw University of Science and Technology,
Wybrze{\.z}e Wyspia{\'n}skiego 27,
50-370 Wroc{\l}aw, Poland}
\email{bogdan@pwr.edu.pl}
\author{Jacek Dziuba{\'n}ski}
\address{University of Wroc{\l}aw,
Pl. Grunwaldzki 2/4,
50-384 Wroc{\l}aw,
Poland}
\email{Jacek.Dziubanski@math.uni.wroc.pl}
\author{Karol Szczypkowski}
\address{Universit{\"a}t Bielefeld, Postfach 10 01 31,
D-33501 Bielefeld, Germany  and
Wroc{\l}aw University of Science and Technology,
Wybrze{\.z}e Wyspia{\'n}skiego 27,
50-370 Wroc{\l}aw, Poland
}
\email{karol.szczypkowski@math.uni-bielefeld.de, karol.szczypkowski@pwr.edu.pl}
\keywords{Schr\"odinger perturbation, sharp Gaussian estimates}
\begin{document}

\begin{abstract}
We give new sufficient conditions for comparability
of
the fundamental solution of the Schr{\"o}dinger equation $\partial_t=\Delta+V$ with the  Gauss-Weierstrass kernel and
show that local $L^p$
integrability of $V$ for $p> 1$ is not necessary for the comparability.
\end{abstract}

\maketitle

\section{Introduction and Preliminaries}\label{sec:prel}
Let $d=1,2,\ldots$.
We consider the Gauss-Weierstrass kernel,
\[g(t,x,y)=
(4\pi t)^{-d/2} e^{-|y-x|^2/(4t)}, \qquad t>0,\ x,y\in\Rd.\]
It is well known that $g$ is a time-homogeneous probability transition density.
For a
function $V$
we let $G$
be the Schr\"odinger perturbation of $g$ by $V$, i.e., the fundamental solution of $\partial_t=\Delta+V$, 
determined by the following Duhamel or perturbation formula for $t>0$, $x,y\in \Rd$,
\[
G(t,x,y)=g(t,x,y)+\int_0^t \int_\Rd G(s,x,z)V(z)g(t-s,z,y)dzds.
\]
Under appropriate assumptions on $V$, the definition of $G$ may be given by the Feynmann-Kac formula \cite[Section~6]{MR2457489}, the Trotter formula \cite[p.~467]{MR1978999}, the perturbation series \cite{MR2457489}, or by means of quadratic forms on $L^2$ spaces \cite[Section~4]{MR591851}.
In particular the assumption $V\in L^p(\Rd)$ with $p>d/2$ was
used
by Aronson
\cite{MR0435594}, Zhang \cite[Remark~1.1(b)]{MR1978999} and by Dziuba{\'n}ski and Zienkiewicz \cite{MR2164260}.
Aizenman and Simon \cite{MR644024,MR670130} proposed
functions $V(z)$ from the Kato class, which
contains
$L^{p}(\Rd)$
for every $p>d/2$
\cite[Chapter~4]{MR644024}, \cite[Chapter 3, Example 2]{MR1329992}.
An enlarged Kato class was used by Voigt \cite{MR845197} in the study of Schr{\"o}dinger semigroups on  $L^1$ \cite[Proposition~5.1]{MR845197}.
For time-dependent perturbations $V(u,z)$, Zhang \cite{MR1360232,MR1488344} introduced the so-called parabolic Kato condition.
It was then
generalized and employed by Schnaubelt and Voigt \cite{MR1687500}, Liskevich and Semenov
\cite{MR1783642}, Liskevich, Vogt and Voigt
\cite{MR2253015}, and Gulisashvili and van Casteren \cite{MR2253111}.

We say that $G$ has sharp Gaussian estimates if $G$ is comparable with $g$, at least in bounded time (see below for details).
A
sufficient condition for the sharp Gaussian estimates was given
by Zhang in \cite{MR1978999}.
As noted in \cite[Remark~1.2(c)]{MR1978999}, the condition
may be stated in terms of the {\it bridges} of $g$.
Bogdan, Jakubowski and Hansen \cite[Section~6]{MR2457489} and Bogdan, Jakubowski and Sydor \cite{MR3000465} gave analogous 
conditions for general transition densities.

Given a real-valued Borel measurable function $V$ on $\Rd$ we ask if there are numbers $0<c_1\le c_2<\infty$ such that the following two-sided bounds hold,
\begin{align}\label{est:sharp_uni}
c_1  \leq \frac{G(t,x,y)}{g(t,x,y)}\leq c_2,\qquad t>0,\ x,y\in \Rd.
\end{align}
We also ponder a weaker property--if for a given $T\in (0,\infty)$,
\begin{align}\label{est:sharp_time}
c_1  \leq \frac{G(t,x,y)}{g(t,x,y)}\leq c_2 \,,\qquad 0<t\le T,\ x,y\in \Rd.
\end{align}
We call \eqref{est:sharp_uni} and \eqref{est:sharp_time} {\it sharp Gaussian estimates} or bounds, respectively {\it global} and {\it local} in time.
We observe that the inequalities in \eqref{est:sharp_uni} and \eqref{est:sharp_time} are stronger than
plain
{\it 
Gaussian estimates:}
\begin{align*}\label{e:Ge}
c_1\, (4\pi t)^{-d/2} e^{-\frac{|y-x|^2}{4t\varepsilon_1}} \leq G(t,x,y)\leq c_2\, (4\pi t)^{-d/2} e^{-\frac{|y-x|^2}{4t\varepsilon_2}},
\end{align*}
where
$0<\varepsilon_1,c_1\le 1\le \varepsilon_2,c_2<\infty$. We note in passing
that
Berenstein proved
the Gaussian estimates for  $V\in L^p$ with $p>d/2$ (see \cite{MR1642818}),
Simon \cite[Theorem B.7.1]{MR670130} resolved
them for $V$ in the Kato class, Zhang  used subparabolic Kato class for the same end in \cite{MR1488344} 
and the so-called 4G inequality
was used by Bogdan and Szczypkowski in \cite{MR3200161}.
For further discussion
we refer the reader to
\cite{MR1783642}, \cite{MR2253015}, \cite{MR1994762}, \cite{MR1978999}, and the Introduction in \cite{MR3200161}.

It is difficult to explicitely characterize
\eqref{est:sharp_uni} and  \eqref{est:sharp_time},
especially for those $V$ that take on positive values.
Arsen'ev proved \eqref{est:sharp_time} for $V\in L^p+L^{\infty}$
with $p>d/2$, $d\geq 3$, and van Casteren \cite{MR1009389} proved
it for $V$ in the intersection of the Kato class and
$L^{d/2}+L^{\infty}$ for $d\geq 3$ (see \cite{MR1994762}). Arsen'ev also obtained \eqref{est:sharp_uni} for $V\in L^p$ with $p > d/2$ under additional smoothness assumptions (see \cite{MR1642818}).
Zhang \cite[Theorem~1.1]{MR1978999} and Milman and Semenov \cite[Theorem~1C, Remark (2)]{MR1994762}
gave  sufficient supremum-integral conditions for \eqref{est:sharp_time} and \eqref{est:sharp_uni} for signed $V$  and characterized \eqref{est:sharp_uni}
for $V\le 0$. Their results
left
open
certain natural questions about the  class
of admissible functions $V$, especially for dimensions $d\ge 4$.
We
were particularly motivated by the question of 
Liskevich and Semenov about 
the connection of the sharp Gaussian estimates,
the potential-boundedness and the 
$L^{d/2}$ integrability condition, cf.
\cite[Remark (3), p. 602]{MR1642818}. In this work we use potential-boundedness \eqref{e:si} and bridges potential-boundedness of $V$ to study the connection of the sharp Gaussian estimates and the $L^p$ integrability, disregarding the Kato condition.
In Theorem~\ref{prop:most_extended} below
we
give new sufficient conditions for the sharp Gaussian estimates,
which help
verify
that
$L^p$ integrability is not necessary for \eqref{est:sharp_uni} or \eqref{est:sharp_time}. Namely,  for $d\ge 3$ we present in Corollary~\ref{cor:ce} functions $V$ such that
\eqref{est:sharp_uni} holds but $V\notin L^1(\Rd)\cup \bigcup_{p>1}L^p_{loc}(\Rd)$. Our examples are highly anisotropic because they are constructed from tensor products, and to study them we crucially use factorization of the Gauss-Weierstrass kernel.
Before discussing the present results we should mention our more recent paper \cite{2016arXiv160603745B}, where we give a new characterization of \eqref{est:sharp_uni} and resolve the question of  Liskevich and Semenov. Both papers grew out from our work on this question but contain
different observations.
In fact \cite{2016arXiv160603745B} uses the preliminary results stated in this Introduction, apart from which the two papers  have no overlap.

The structure of the remainder of the paper is the following.
Below in this section we give definitions and preliminaries,
and organize the relevant results 
existing in the literature.
In particular in Lemma~\ref{cor:gen_neg}
we 
present characterizations of \eqref{est:sharp_uni} and \eqref{est:sharp_time}
for $V\le 0$. In Theorem~\ref{prop:most_extended}
 of Section~\ref{sec:gen_app} we propose new sufficient conditions for
\eqref{est:sharp_uni} and  \eqref{est:sharp_time}, with emphasis on
those functions $V$ which factorize as tensor products.
In Section~\ref{sec:e} we prove Corollary~\ref{cor:ce} 
and give examples which illustrate and comment our results.
In Section~\ref{sec:a} we give supplementary details.

Let $\NN=\{1,2,\ldots\}$, $f^+=\max\{0,f\}$ and $f^-=\max\{0,-f\}$. All the considered functions $V\colon\Rd\to
[-\infty, \infty]$ are assumed
Borel measurable.

Here is a quantity to characterize \eqref{est:sharp_uni} and  \eqref{est:sharp_time}:
\begin{align*}
S(V,t,x,y)=\int_0^t \int_{\Rd} \frac{g(s,x,z)g(t-s,z,y)}{g(t,x,y)}|V(z)|\,dzds, \quad t>0,\ x,y\in \Rd.
\end{align*}
In what follows we often abbreviate $S(V)$.
The motivation
for using $S(V)$ comes from
\cite[Lemma~3.1 and Lemma~3.2]{MR1978999} and from \cite[(1)]{MR2457489}.

In the next two results we
compile  \cite[Theorem~1.1]{MR1978999} and observations from \cite{MR2457489} and \cite{MR3000465} to give conditions for the sharp Gaussian estimates.
For completeness, the proofs are given in Section~\ref{sec:a}.

\begin{lem}\label{cor:gen_neg}
If $V\leq 0$, then \eqref{est:sharp_uni} is equivalent to
\begin{equation}\label{e:sSbi}
\sup_{t>0,\,x,y\in\Rd} S(V,t,x,y)<\infty.
\end{equation}
If $V\leq 0$, then for every $T\in (0,\infty)$,
\eqref{est:sharp_time} is equivalent to
\begin{equation}\label{e:sSbt}
\sup_{0<t\leq T,\,x,y\in\Rd} S(V,t,x,y)<\infty.
\end{equation}
\end{lem}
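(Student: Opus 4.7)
My plan is to handle both parts of the lemma in parallel, since only the range of $t$ differs. The upper bound with $c_2 = 1$ is free for $V \le 0$: substituting $G \ge 0$ and $V \le 0$ into the Duhamel formula shows that the integral term is non-positive, hence $G(t,x,y) \le g(t,x,y)$ everywhere.

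For the direction \eqref{est:sharp_uni}$\Rightarrow$\eqref{e:sSbi}, I would rewrite the perturbation formula as
\[g(t,x,y) - G(t,x,y) = \int_0^t\!\int_{\Rd} G(s,x,z)\,|V(z)|\,g(t-s,z,y)\,dz\,ds\]
and insert the lower bound $G(s,x,z) \ge c_1 g(s,x,z)$. Dividing through by $g(t,x,y)$ gives $1 - G(t,x,y)/g(t,x,y) \ge c_1\, S(V,t,x,y)$, so $S(V,t,x,y) \le (1 - c_1)/c_1$ uniformly in $t,x,y$. The identical argument restricted to $t \in (0,T]$ yields \eqref{est:sharp_time}$\Rightarrow$\eqref{e:sSbt}.

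For the converse I would use the Feynman--Kac representation
\[G(t,x,y) = g(t,x,y)\,\mathbf{E}^{x,y}_t\!\left[\exp\!\left(\int_0^t V(X_s)\,ds\right)\right],\]
where $\mathbf{E}^{x,y}_t$ denotes expectation under the Brownian bridge from $x$ to $y$ on $[0,t]$, whose one-dimensional marginal at time $s$ equals $g(s,x,z)g(t-s,z,y)/g(t,x,y)$. Since $V \le 0$ and $\exp$ is convex, Jensen's inequality produces
\[\frac{G(t,x,y)}{g(t,x,y)} \ge \exp\!\left(\mathbf{E}^{x,y}_t\!\left[\int_0^t V(X_s)\,ds\right]\right) = e^{-S(V,t,x,y)}.\]
Taking suprema over the relevant range of $(t,x,y)$ yields $c_1 = e^{-M}$ with $M$ equal to the supremum in \eqref{e:sSbi} or \eqref{e:sSbt}, closing both implications.

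The main obstacle, in my view, is the rigorous justification of the Feynman--Kac identity under the sole standing hypothesis that $V \le 0$ is Borel with $\sup S(V) < \infty$: one must ensure finiteness of the bridge expectation, its coincidence with the perturbation series, and its solving of the Duhamel equation. A purely analytic alternative is a two-stage Khasminski-type bound. From the perturbation series $G = \sum_{n\ge 0} (-1)^n \tilde G_n$, where $\tilde G_n$ is built with $|V|$, a one-line induction shows $\tilde G_n \le g\, \eta^n$ whenever $\eta := \sup S(V) < 1$, and so $G \ge g(1-\eta)$. The smallness assumption is then removed by the standard trick $V = N \cdot (V/N)$ combined with a Trotter-product argument, but this route is noticeably more technical than the Feynman--Kac/Jensen one sketched above.
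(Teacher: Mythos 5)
Your proposal is correct and follows essentially the same two routes the paper invokes, though the paper only cites references rather than spelling out the arguments. Your necessity direction (sharp estimates $\Rightarrow \sup S(V)<\infty$) via the Duhamel identity $g-G=\iint G|V|g$ and the lower bound $G\ge c_1 g$ is exactly the elementary reading from Zhang's Theorem 1.1(a) that the paper relies on; and your sufficiency direction via the Brownian-bridge Feynman--Kac representation plus Jensen's inequality is precisely the ``alternative'' the paper mentions, namely applying Jensen to the second displayed formula on p.~252 of \cite{MR2457489}, which is that bridge representation written as a convergent series. The upper bound $c_2=1$ for $V\le 0$ is immediate from Duhamel, as you say.

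Two small remarks. First, the technical concern you raise about justifying Feynman--Kac under the sole hypotheses $V\le 0$, $\sup S(V)<\infty$ is legitimate, but the paper faces exactly the same issue and discharges it by citing \cite{MR1978999} and \cite{MR2457489}; your identification of the obstacle is correct, not a gap in the paper's statement. Second, your sketch of the purely analytic alternative is slightly off: from $\tilde G_n\le g\,\eta^n$ alone the alternating series does not immediately give $G\ge g(1-\eta)$. The one-line fix is to feed the already-established bound $G\le g$ back into the Duhamel equation: $G=g-\iint G|V|g\ge g-\iint g|V|g=g(1-S(V))\ge g(1-\eta)$ when $\eta<1$. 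The $\eta\ge 1$ case still needs the Jensen argument (or a Trotter-type iteration), so as you note the Feynman--Kac route is the cleaner one, and it gives the sharper constant $e^{-M}$.
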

It is appropriate to say that $V$ satisfying \eqref{e:sSbi} or \eqref{e:sSbt} has bounded potential for bridges (is bridges~potential-bounded), globally or locally in time, respectively, cf. Section~~\ref{sec:gen_app}.

\begin{lem}\label{lem:gen_neg}
If for some
$h>0$ and $0\le \eta<1$ we have
$$\sup_{0<t\leq h,\,x,y\in\Rd} S(V^+,t,x,y)\le \eta,$$
and $S(V^-)$ is bounded on bounded subsets of
$(0,\infty)\times\Rd\times\Rd$
then
\begin{align}\label{gen_est}
e^{-S(V^-,t,x,y)} \leq \frac{G(t,x,y)}{g(t,x,y)}\leq \left(\frac{1}{1-\eta}\right)^{1+t/h}, \qquad t>0, \ x,y\in \Rd \,.
\end{align}
\end{lem}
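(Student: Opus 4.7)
The plan is to bound $G/g$ above and below separately, in each case reducing to a one-signed potential via the monotonicity $V_1 \le V_2 \Rightarrow G_{V_1} \le G_{V_2}$, which is visible from the Duhamel iteration and valid under the hypotheses of the lemma.

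For the upper bound I replace $V$ by $V^+$: since $V \le V^+$, monotonicity gives $G \le \tilde G$, where $\tilde G$ is the perturbation of $g$ by $V^+$. Because $V^+ \ge 0$, the perturbation series is non-negative and yields $\tilde G(t,x,y) = g(t,x,y) \sum_{n \ge 0} S_n(V^+, t, x, y)$, with $S_0 \equiv 1$, $S_1 = S(V^+, \cdot)$, and
\[
S_{n+1}(V^+, t, x, y) = \int_0^t \!\!\int_\Rd \frac{g(s, x, z) g(t-s, z, y)}{g(t, x, y)}\, V^+(z)\, S_n(V^+, s, x, z)\, dz\, ds,
\]
as developed in \cite{MR2457489, MR3000465}. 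Induction on $n$ together with the hypothesis $\sup_{0 < t \le h,\, x, y} S_1 \le \eta$ yields $\sup_{0 < t \le h,\, x, y} S_n \le \eta^n$, so $\tilde G/g \le 1/(1-\eta)$ on $0 < t \le h$. To pass to $t > h$, I use the Chapman--Kolmogorov identity enjoyed by both $\tilde G$ and $g$: setting $\rho(t) = \sup_{x, y} \tilde G(t, x, y)/g(t, x, y)$, pointwise bounding each factor under the identity for $\tilde G$ gives the submultiplicativity $\rho(t) \le \rho(s)\rho(t - s)$. Writing $t = nh + r$ with $n$ a nonnegative integer and $r \in (0, h]$, iteration produces $\rho(t) \le \rho(h)^n \rho(r) \le (1/(1-\eta))^{n+1} \le (1/(1-\eta))^{1+t/h}$.

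For the lower bound I apply the Feynman--Kac representation
\[
G(t, x, y)/g(t, x, y) = E^{x, y}_t\!\left[\exp\!\left(\int_0^t V(X_s)\, ds\right)\right]
\]
on the Brownian bridge of time length $t$ from $x$ to $y$, which is valid under the present hypotheses. Since $V \ge -V^-$, the integrand is at least $\exp(-\int_0^t V^-(X_s)\, ds)$; taking expectations and then applying Jensen's inequality for the convex exponential,
\[
G(t, x, y)/g(t, x, y) \ge \exp\!\left(- E^{x, y}_t\!\int_0^t V^-(X_s)\, ds\right) = e^{-S(V^-, t, x, y)},
\]
where the last equality follows from Fubini together with the formula $\int_\Rd [g(s, x, z)\, g(t-s, z, y)/g(t, x, y)]\, V^-(z)\, dz$ for the bridge expectation of $V^-(X_s)$.

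I expect the main obstacle to be the short-time upper bound: the inductive estimate $\sup S_n \le \eta^n$ rests on the identification $g_n(V^+)/g = S_n(V^+)$ and a careful interchange of orders of integration in the iterated Duhamel kernels, using crucially the Chapman--Kolmogorov property of $g$. This machinery is available in \cite{MR2457489, MR3000465}, but its application here takes some care. Once the short-time bound is in place, the Chapman--Kolmogorov extension to $t > h$ and the Feynman--Kac plus Jensen step for the lower bound are both short.
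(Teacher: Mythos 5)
Your proof is correct, and the lower bound follows essentially the same route as the paper: both ultimately invoke Jensen's inequality applied to a bridge (Feynman--Kac) representation of $G/g$. You should, however, acknowledge that establishing the Feynman--Kac identity for the $V$ permitted here requires real work (the paper deflects this to \cite[pp.\ 467--469]{MR1978999} together with Lemma~\ref{l:b}, or to \cite[(41)]{MR2457489}), rather than just asserting it "is valid under the present hypotheses."

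For the upper bound you take a genuinely more elementary route. The paper reduces to $V\ge0$ by monotonicity exactly as you do, but then invokes the combinatorial result \cite[Theorem~1]{MR2507445} (see also \cite[Theorem~3]{MR3000465}): if $p_1\le[\eta+Q(s,t)]p_0$ with $Q$ superadditive, then $\sum_n p_n\le(1-\eta)^{-1-Q/\eta}p_0$, and it verifies the hypothesis of that theorem via the monotonicity/subadditivity machinery (Corollary~\ref{cor:ineq_most}) developed in the appendix. You instead prove $\sup_{0<t\le h}S_n\le\eta^n$ directly by induction (that induction is correct), sum the series to get $\tilde G/g\le(1-\eta)^{-1}$ on $(0,h]$, and then extend to $t>h$ by noting that Chapman--Kolmogorov for $\tilde G$ makes $\rho(t)=\sup_{x,y}\tilde G(t,x,y)/g(t,x,y)$ submultiplicative, whence $\rho(t)\le\rho(h)^n\rho(r)\le(1-\eta)^{-(n+1)}\le(1-\eta)^{-(1+t/h)}$ for $t=nh+r$, $0<r\le h$. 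This two-step argument is self-contained and avoids appealing to Jakubowski's lemma and the appendix; the trade-off is that it uses directly that $\tilde G$ (the series for $V^+$) satisfies Chapman--Kolmogorov, which you state without proof. Both approaches implicitly rely on the perturbation series for $V^+\ge 0$ converging and giving $\tilde G$; your $S_n\le\eta^n$ bound on $(0,h]$ in fact justifies that convergence, so this is not a gap.

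Net: the lower bound matches the paper; the upper bound reaches the identical numerical constant $(1-\eta)^{-(1+t/h)}$ via a more elementary induction-plus-submultiplicativity argument in place of citing \cite[Theorem~1]{MR2507445}.
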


We
record
the following observations on
integrability
and on the potential-boundedness \eqref{e:si} of  functions $V$ which are bridges potential-bounded.
\begin{lem}\label{l:b}
If $S(V,t,x,y)<\infty$ for some $t>0$, $x,y\in\Rd$, then $V\in L^1_{\rm loc}(\Rd)$.
If \eqref{e:sSbt} holds, then
\begin{align}\label{e:si_local}
\sup_{x\in\Rd}\int_0^T \int_{\Rd} g(s,x,z)|V(z)|\,dzds<\infty\,.
\end{align}
If  \eqref{e:sSbi} even holds,
then $|V|$ has bounded Newtonian potential:
\begin{equation}\label{e:si}
\sup_{x\in\Rd}\int_0^{\infty} \int_{\Rd} g(s,x,z)|V(z)|\,dzds<\infty\,.
\end{equation}
If $V \geq 0$, then \eqref{est:sharp_uni} implies 
\eqref{e:sSbi} and \eqref{est:sharp_time} implies
\eqref{e:sSbt}.
If $d=3$ and $V\le 0$, then
\eqref{e:si} is equivalent to \eqref{est:sharp_uni}.
\end{lem}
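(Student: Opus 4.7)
The plan is to handle the five assertions in the order stated; only the last requires substantive input, while the first four follow from a single Fubini computation and from iterating Duhamel's formula once.

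For the first assertion, I would fix $t>0$ and $x,y$ with $S(V,t,x,y)<\infty$, note that the bridge density $(s,z)\mapsto g(s,x,z)g(t-s,z,y)/g(t,x,y)$ is jointly continuous and strictly positive on $(0,t)\times\Rd$, and hence admits a positive lower bound $c(I,K)$ on any product $I\times K$ with $I\subset(0,t)$ a closed interval and $K\subset\Rd$ compact; this yields $c(I,K)\,|I|\int_K|V|\le S(V,t,x,y)<\infty$, so $V\in L^1_{\rm loc}(\Rd)$. For the second and third assertions the starting point is the Tonelli identity
\begin{align*}
\int_{\Rd}g(t,x,y)\,S(V,t,x,y)\,dy=\int_0^t\int_{\Rd}g(s,x,z)|V(z)|\,dz\,ds,
\end{align*}
which comes from $\int g(t-s,z,y)\,dy=1$. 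Under \eqref{e:sSbt}, the left-hand side is at most $\sup S(V)\cdot\int g(t,x,y)\,dy=\sup S(V)<\infty$ uniformly in $t\le T$ and $x$, giving \eqref{e:si_local} on taking $t=T$; under the global bound \eqref{e:sSbi} the same holds for every $t>0$, and monotone convergence as $t\to\infty$ delivers \eqref{e:si}.

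For the fourth assertion, when $V\ge 0$ the trivial lower bound $G\ge g$ substituted back into the Duhamel formula gives $G\ge g\,(1+S(V))$ pointwise, so an upper bound $G/g\le c_2$ (global or local in $t$) transfers at once to $S(V)\le c_2-1$ in the same range, yielding \eqref{e:sSbi} or \eqref{e:sSbt}.

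The fifth assertion is the only substantive one. Lemma~\ref{cor:gen_neg} identifies \eqref{est:sharp_uni} with \eqref{e:sSbi}, and the third assertion above gives \eqref{e:sSbi}$\Rightarrow$\eqref{e:si}, so I would only need to prove \eqref{e:si}$\Rightarrow$\eqref{e:sSbi} under the standing hypotheses $d=3$, $V\le 0$. After swapping orders of integration by Fubini the task reduces to the parabolic $3G$-type inequality
\begin{align*}
\int_0^t\frac{g(s,x,z)\,g(t-s,z,y)}{g(t,x,y)}\,ds\le C\Bigl(\frac{1}{|x-z|}+\frac{1}{|z-y|}\Bigr),\quad x,y,z\in\RR^3,\ t>0,
\end{align*}
which, combined with the three-dimensional identity $\int_0^\infty g(s,w,z)\,ds=(4\pi|w-z|)^{-1}$, expresses \eqref{e:si} as the uniform bound $\sup_w\int|V(z)|/|w-z|\,dz<\infty$ and converts it into $\sup_{t,x,y}S(V,t,x,y)<\infty$. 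The main obstacle is precisely this parabolic $3G$-inequality: a naive pointwise comparison of the bridge density with $g(s,x,z)+g(t-s,z,y)$ is not uniform in $|x-y|$, and one has to work directly with the explicit Gaussian density $(4\pi s(t-s)/t)^{-3/2}\exp(-t|z-m|^2/(4s(t-s)))$, $m=x+(s/t)(y-x)$, splitting the $s$-integral symmetrically at $t/2$ and using the $d=3$ scaling $\int_0^\infty s^{-3/2}e^{-r^2/(4s)}\,ds=c/r$ to recover the Newtonian kernel on the right.
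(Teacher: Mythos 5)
Your handling of the first four assertions is the same as the paper's. The locally bounded-below observation for $g$, the Tonelli identity $\int_\Rd g(t,x,y)S(V,t,x,y)\,dy=\int_0^t\int_\Rd g(s,x,z)|V(z)|\,dz\,ds$, and the one-step iteration of Duhamel (equivalently, $G\ge g$ inserted into the perturbation formula to get $G\ge g(1+S(V))$) are precisely what the paper invokes, just in slightly less explicit form.

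For the fifth assertion you diverge from the paper, and it is worth spelling out how. The paper simply cites Milman and Semenov, whereas you propose to run the argument yourself through a parabolic $3G$-type inequality
\begin{equation*}
\int_0^t\frac{g(s,x,z)\,g(t-s,z,y)}{g(t,x,y)}\,ds\le C\Bigl(\frac{1}{|x-z|}+\frac{1}{|z-y|}\Bigr)\qquad (d=3),
\end{equation*}
which indeed, combined with $\int_0^\infty g(s,w,z)\,ds=(4\pi|w-z|)^{-1}$ and Lemma~\ref{cor:gen_neg}, closes the circle. Two comments. First, this inequality is actually true and admits a short complete proof beyond the sketch you give: on $s\in(0,t/2]$ substitute $v=st/(t-s)\in(0,t]$, which turns the bridge integral into $\int_0^t(4\pi v)^{-3/2}(1+v/t)\exp\bigl(-\tfrac{|(z-x)+(v/t)(z-y)|^2}{4v}\bigr)\,dv$, expand the square so that the exponent separates as $-\tfrac{|z-x|^2}{4v}-\tfrac{v|z-y|^2}{4t^2}-\tfrac{\langle z-x,z-y\rangle}{2t}$, bound $1+v/t\le 2$ and extend the integral to $(0,\infty)$, and use the closed form $\int_0^\infty v^{-3/2}e^{-A/v-Bv}\,dv=\sqrt{\pi/A}\,e^{-2\sqrt{AB}}$; the resulting exponential factor is $\exp\bigl(-(\langle z-x,z-y\rangle+|z-x||z-y|)/(2t)\bigr)\le 1$ by Cauchy--Schwarz, leaving exactly $C/|z-x|$. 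The $(t/2,t)$ piece is symmetric. Second, your remark that the naive pointwise comparison is not uniform in $|x-y|$ is the right warning; a stronger warning is that the inequality genuinely depends on $d=3$: at the midpoint of a long segment the time-integral of the bridge density scales like $t^{(3-d)/2}/|x-y|^{d-2}$ as $t\downarrow 0$, which is compatible with the Riesz kernel only when $d=3$. This is precisely why the lemma is stated only in dimension three. So your route is correct and self-contained where the paper relies on an external reference, at the cost of having to prove and carefully justify the parabolic $3G$ bound.
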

\begin{proof}
The first statement follows, because
$g(t,x,y)$ is locally bounded from below on $(0,\infty)\times\Rd\times\Rd$ (see
\cite[Lemma~3.7]{GS} for a
quantitative general result).
Since
$\int_\Rd S(V,t,x,y)g(t,x,y)\,dy=\int_0^t \int_{\Rd} g(s,x,z)|V(z)|\,dzds$,
we see that
\eqref{e:sSbt} implies \eqref{e:si_local}
and
\eqref{e:sSbi} implies \eqref{e:si}.
The next to the last sentence easily follows from Duhamel formula and the fact that $G\ge g$ in this case.
The last statement follows from \cite[Remark~(2) and (3) on p. 4]{MR1994762}.
\end{proof}
We note that \eqref{e:si} and thus also \eqref{e:sSbi}
fail for all nonzero $V$
in dimensions $d=1$ and $d=2$, because then
$\int_0^{\infty} g(s,x,z)ds\equiv \infty$.
Consequently,
\eqref{est:sharp_uni} fails for
nontrivial $V\le 0$ if $d=1$ or $2$.
For $d\geq 3$ we let $C_d=\frac{\Gamma(d/2-1)}{4\pi^{d/2}}$.
 The
Newtonian potential  of nonnegative function $f$ and $x\in \Rd$ will be denoted
\begin{align*}
-\Delta^{-1} f(x)
:=\int_0^{\infty}\int_{\Rd}g(s,x,z) f(z)\,dzds
=C_d\int_{\Rd} \frac{1}{|z-x|^{d-2}}f(z)\,dz\,.
\end{align*}
Thus,
\eqref{e:si} reads $\|\Delta^{-1} |V| \|_{\infty}<\infty$.
We also note that by Theorem~1C~(2), Remark~(3) on p.~4, and the comments before Theorem~1B in \cite{MR1994762}, $\|\Delta^{-1} V \|_{\infty}<1$
suffices for \eqref{est:sharp_uni} when $d=3$ and $V\ge 0$.

\section{Sufficient conditions for
the sharp Gaussian estimates}\label{sec:gen_app}
Recall from \cite[(2.5)]{MR549115} that for $p\in [1,\infty]$,
\begin{align*}
\| P_t f\|_{\infty} \leq C(d,p) \,t^{-d/(2p)} \|f \|_p\, ,\qquad t>0\,,
\end{align*}
where $P_t f(x)=\int_{\Rd} g(t,x,z)f(z)dz$, $f\in L^p(\Rd)$ and
$$
C(d,p)=\begin{cases} (4\pi)^{-d/2}, \quad & \mbox{ if \ \ } p=1\,,\\ (4\pi)^{-d/(2p)} (1-p^{-1})^{(1-p^{-1})d/2}, & \mbox{ if \ \ } p\in (1,\infty]\,. \end{cases}
$$
We will give an analogue for the {\it bridges} $T^{t,y}_s$. Here $t>0$, $y\in\Rd$, and
$$
T^{t,y}_s f (x) = \int_{\Rd} \frac{g(s,x,z)\,g(t-s,z,y)}{g(t,x,y)} f(z) \,dz\,,\qquad 0<s<t,\quad x\in\Rd\,.
$$
Clearly,
\begin{equation}\label{e:sT}
T^{t,y}_s f (x) =
T^{t,x}_{t-s} f (y),\qquad 0<s<t,\quad x,y\in\Rd\,.
\end{equation}
By  the Chapman-Kolmogorov equations (the semigroup property) for the kernel $g$, we have $T^{t,y}_s 1=1$.
We also note that $S(V)$ is related to the potential ($0$-resolvent) operator of $T$ as follows,
$$
S(V,t,x,y)=\int_0^t T^{t,y}_s\, |V|(x)\,ds\,.
$$
\begin{lem}
\label{lem:Lp}
For $p\in [1,\infty]$ and $f\in L^p(\Rd)$ we have
\begin{align*}
\| T^{t,y}_s f\|_{\infty} \leq C(d,p) \,\left[\frac{(t-s)s}{t}\right]^{-d/(2p)} \|f \|_p\,,\qquad 0<s<t,\,y\in\Rd\,.
\end{align*}
\end{lem}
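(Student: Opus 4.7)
My plan is to reduce the bridge operator to a standard Gauss--Weierstrass convolution at a rescaled time, and then invoke the $L^p\to L^\infty$ bound quoted at the opening of this section.

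The first and main step is to establish, by completing the square in $z$, the algebraic identity
\[
\frac{g(s,x,z)\,g(t-s,z,y)}{g(t,x,y)} \;=\; g(\sigma, z, m),\qquad z\in\Rd,
\]
where $\sigma := s(t-s)/t$ and $m := \frac{(t-s)x+sy}{t}$ are respectively the variance and mean of the Brownian bridge from $x$ to $y$ at the intermediate time $s$. This is pure algebra: the prefactors collapse via $(4\pi s)(4\pi(t-s))/(4\pi t) = 4\pi\sigma$, while the quadratic-form identities $\tfrac{1}{4s}+\tfrac{1}{4(t-s)}=\tfrac{1}{4\sigma}$ and $\tfrac{\alpha\beta}{\alpha+\beta}=\tfrac{1}{4t}$ with $\alpha=\tfrac{1}{4s}$, $\beta=\tfrac{1}{4(t-s)}$ handle the exponent and exactly absorb the $|y-x|^2/(4t)$ term coming from $1/g(t,x,y)$.

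Once this identity is in hand, the lemma falls out. I would rewrite
\[
T^{t,y}_s f(x) \;=\; \int_{\Rd} g(\sigma, z, m)\, f(z)\,dz \;=\; (P_\sigma f)(m),
\]
and conclude
\[
|T^{t,y}_s f(x)| \;\le\; \|P_\sigma f\|_\infty \;\le\; C(d,p)\,\sigma^{-d/(2p)}\,\|f\|_p,
\]
which is the stated bound with $\sigma=s(t-s)/t$. I do not anticipate any real obstacle: the content of the lemma is precisely the recognition that a bridge average is a heat average at the shrunken time $s(t-s)/t$ evaluated at the interpolation point $\frac{(t-s)x+sy}{t}$, and once that is observed the stated $L^p\to L^\infty$ inequality is inherited directly from the corresponding inequality for $P_\sigma$.
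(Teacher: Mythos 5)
Your proof is correct, and it takes a cleaner and genuinely different route from the paper's. You observe that the bridge kernel is \emph{exactly} a Gaussian in $z$ at the rescaled time $\sigma=s(t-s)/t$ centered at the interpolation point $m=\bigl((t-s)x+sy\bigr)/t$; the algebra you describe (completing the square with $a=\tfrac1{4s}$, $b=\tfrac1{4(t-s)}$, using $a+b=\tfrac1{4\sigma}$ and $\tfrac{ab}{a+b}=\tfrac1{4t}$, and the prefactor collapse $\tfrac{(4\pi s)(4\pi(t-s))}{4\pi t}=4\pi\sigma$) is indeed valid and gives $T^{t,y}_sf(x)=P_\sigma f(m)$. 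The lemma then falls out of Carmona's bound $\|P_\sigma f\|_\infty\le C(d,p)\sigma^{-d/(2p)}\|f\|_p$ quoted at the head of the section, for all $p\in[1,\infty]$ at once. The paper instead only uses the one-sided inequality $\tfrac{|z-x|^2}{4s}+\tfrac{|y-z|^2}{4(t-s)}\ge\tfrac{|y-x|^2}{4t}$ to dispose of the $p=1$ case, and then treats $1<p<\infty$ by a fresh application of H\"older's inequality together with the scaling identity $g(s,x,z)^{p'}=g(s/p',x,z)(4\pi s)^{(1-p')d/2}(p')^{-d/2}$ and Chapman--Kolmogorov, with $p=\infty$ handled separately via $T^{t,y}_s1=1$. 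Your argument is shorter and more conceptual (it explains why the exponent $[(t-s)s/t]^{-d/(2p)}$ appears: it is the variance of the Brownian bridge marginal at time $s$), and it avoids the case split; the paper's H\"older computation is essentially a reproof of the quoted $P_t$ estimate specialized to the bridge. Both yield the same sharp constant $C(d,p)$.
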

\begin{proof}
We note that
$$\frac{g(s,x,z)\,g(t-s,z,y)}{(4\pi)^{-d/2}g(t,x,y)}=\left[\frac{(t-s)s}{t}\right]^{-\frac{d}{2}}
\exp\left[ -
\frac{|z-x|^2}{4s} -\frac{|y-z|^2}{4(t-s)} +\frac{|y-x|^2}{4t}
\right].
$$
As in \cite[(3.4)]{MR1457736}, we have
\begin{equation}\label{e:ti}
\frac{|z-x|^2}{4s}+\frac{|y-z|^2}{4(t-s)}\ge \frac{|y-x|^2}{4t}.
\end{equation}
Indeed, (\ref{e:ti}) obtains from by the triangle  and Cauchy-Schwarz inequalities:
\begin{align*}
|y-x|&\le \sqrt{s}\frac{|z-x|}{\sqrt{s}}+\sqrt{t-s}\frac{|y-z|}{\sqrt{t-s}}
\le \sqrt{t}\left(
\frac{|z-x|^2}{s}+\frac{|y-z|^2}{t-s}
\right)^{1/2}.
\end{align*}
For $p=1$, the assertion of the lemma results from \eqref{e:ti}.
For $p\in (1,\infty)$, we let $p'=p/(p-1)$, apply H{\"o}lder's inequality and the semigroup property, and by the first case we obtain
 \begin{equation*}\begin{split}
|T_s^{t,y}f(x)| &  \leq g(t,x,y)^{-1}\Big[\int g(s,x,z)^{p'}g(t-s,z,y)^{p'}\, dz\Big]^{1\slash p'}\| f\|_p \\
  &=C(d,p)\Big[\frac{s(t-s)}{t}\Big]^{-d\slash (2p)}\| f\|_p.
\end{split}\end{equation*}
Here we also use the identity
$g(s,x,z)^{p'}=g(s/p',x,z)(4\pi s)^{(1-p')d/2}(p')^{-d/2}$.
For $p=\infty$, the assertion  follows from the identity $T^{t,y}_s 1=1$.
\end{proof}

Zhang \cite[Proposition~2.1]{MR1978999}
showed that \eqref{est:sharp_uni} and \eqref{est:sharp_time} hold for $V$ in specific $L^p$ spaces (see also \cite[Theorem~1.1 and Remark~1.1]{MR1978999}).
We can reprove his result as follows.
\begin{prop}\label{prop:Zhang_most}
Let $V\colon\Rd\to\RR$ and $p,q\in [1,\infty]$.
\begin{enumerate}
\item[\rm (a)]
If $V\in L^p(\Rd)$, $p> d/2$ and $c= C(d,p) \frac{[\Gamma(1-d/(2p))]^2}{\Gamma(2-d/p)} \|V \|_p$, then
\begin{align*}
\sup_{x,y\in\Rd} S(V,t,x,y)
\leq c\,t^{1-d/(2p)}\,, \qquad t>0\,.
\end{align*}
\item[\rm(b)] If $V\in L^p(\Rd)\cap L^q(\Rd)$ and $q<d/2<p$, then \eqref{e:sSbi} holds.
\end{enumerate}
\end{prop}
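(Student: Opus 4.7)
For part (a), I will use the identity
$$S(V,t,x,y) = \int_0^t T^{t,y}_s|V|(x)\,ds$$
noted before Lemma~\ref{lem:Lp}, and insert the pointwise bound from that lemma:
$$T^{t,y}_s|V|(x) \le C(d,p)\left[\frac{s(t-s)}{t}\right]^{-d/(2p)}\|V\|_p,$$
which holds uniformly in $x,y$. The remaining task is to evaluate the scalar integral $\int_0^t [s(t-s)/t]^{-d/(2p)}\,ds$. The substitution $s=tu$ pulls out a factor $t^{1-d/(2p)}$ and leaves the Beta integral $\int_0^1 u^{-d/(2p)}(1-u)^{-d/(2p)}\,du = [\Gamma(1-d/(2p))]^2/\Gamma(2-d/p)$. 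Convergence of this Beta integral uses precisely the hypothesis $p>d/2$, i.e.\ $d/(2p)<1$, and the resulting constant matches the one in the statement.

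For part (b), the difficulty is that the bound from (a) grows as $t\to\infty$ since the exponent $1-d/(2p)$ is positive, so a single Lebesgue exponent cannot give a bound uniform in $t$. The plan is to use the two exponents at different scales of $s$. Observing that the bridge density is symmetric in $s\leftrightarrow t-s$, so that $s(t-s)/t \ge \tfrac{1}{2}\min(s,t-s)$, the symmetry relation \eqref{e:sT} reduces matters to bounding $\int_0^{t/2} T^{t,y}_s|V|(x)\,ds$. I will handle $t\le 2$ by a direct application of (a) (uniform on a compact range of $t$), and for $t>2$ split the integral at $s=1$. On $(0,1]$, Lemma~\ref{lem:Lp} with exponent $p$ yields an integrand dominated by $C\,s^{-d/(2p)}\|V\|_p$, which is integrable near $0$ because $d/(2p)<1$. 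On $(1,t/2]$, Lemma~\ref{lem:Lp} with exponent $q$ yields an integrand dominated by $C\,s^{-d/(2q)}\|V\|_q$, whose tail integral $\int_1^{\infty} s^{-d/(2q)}\,ds$ converges because $d/(2q)>1$. Summing these two contributions produces a bound independent of $t,x,y$, which is \eqref{e:sSbi}.

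The main obstacle is the regime dichotomy in (b): the assumption $V\in L^p$ with $p>d/2$ is exactly what is needed for the bridge integral to converge near $s=0$, while $V\in L^q$ with $q<d/2$ is what is needed for convergence of the bridge integral near $s=\infty$. Neither hypothesis alone can give both; the argument above succeeds because the split point $s=1$ lies in the overlap where both estimates apply and where both improper integrals converge on their respective sides. Part (a) itself is essentially a direct application of Lemma~\ref{lem:Lp} plus a Beta function computation, with no substantive obstacle once the right scaling substitution is made.
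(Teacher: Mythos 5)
Your proposal is correct and follows essentially the same route as the paper: part (a) via Lemma~\ref{lem:Lp} plus the Beta integral, and part (b) via the symmetry \eqref{e:sT} to reduce to $\int_0^{t/2}$, a split of that range at $s=1$ with the $L^p$ bound near $0$ and the $L^q$ bound on the tail, and part (a) for $t\le 2$. The only cosmetic difference is that you spell out the Beta-function computation which the paper merely references.
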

\begin{proof}
Part ${\rm(a)}$ follows from Lemma~\ref{lem:Lp}, so we proceed to ${\rm(b)}$.
For $t>2$,
\begin{align}\label{e:dT}
\int_0^t T^{t,y}_s |V|(x)\,ds = \int_0^{t/2} T^{t,y}_s |V|(x)\,ds+\int_0^{t/2} T^{t,x}_s |V|(y)\,ds\,.
\end{align}
Estimating the first term of the sum, by
Lemma~\ref{lem:Lp} we obtain
\begin{align}
\int_0^{t/2}  \!\!\!\!T^{t,y}_s |V|(x)\,ds
&\leq c\, \| V \|_p \int_0^1 \left[\frac{(t-s)s}{t}\right]^{-d/(2p)} \!\!\!\! \!\!\!\!\!\!\!\!ds+c\, \| V \|_q\int_1^{t/2} \left[\frac{(t-s)s}{t}\right]^{-d/(2q)}   \!\!\!\!\!\!\!\!\!\!\!\!ds
\nonumber
\\
&\leq  c'\, \| V \|_p \int_0^1 s^{-d/(2p)} ds+c'\, \| V \|_q\int_1^{\infty} s^{-d/(2q)}ds.
\label{eq:jpq}
\end{align}
By \eqref{e:sT}, the second term
has the same bound.
For $t\in (0,2]$ we use ${\rm(a)}$.
\end{proof}

By Lemma~\ref{cor:gen_neg} and \ref{lem:gen_neg} we get the following conclusion.
\begin{cor}\label{cor:esc}
Under the assumptions of
{\rm Proposition~\ref{prop:Zhang_most}(a)}, $G$ 
satisfies the
sharp local Gaussian bounds \eqref{est:sharp_time}. 
If $V\le 0$ and the assumptions of
{\rm Proposition~\ref{prop:Zhang_most}(b)}
hold, then $G$ has the sharp global Gaussian bounds \eqref{est:sharp_uni}.
\end{cor}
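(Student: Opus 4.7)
The plan is to derive each assertion by combining Proposition~\ref{prop:Zhang_most} with the appropriate conclusion of Lemma~\ref{cor:gen_neg} and Lemma~\ref{lem:gen_neg}; neither step should require any new computation beyond what is already in the excerpt.

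For the first assertion, suppose $V\in L^p(\Rd)$ with $p>d/2$ and write $\alpha=1-d/(2p)$, which is positive by the hypothesis on $p$. Proposition~\ref{prop:Zhang_most}(a) yields
\[
\sup_{x,y\in\Rd}S(V,t,x,y)\le c\,t^{\alpha},\qquad t>0.
\]
Since $S(W,t,x,y)$ depends on $W$ only through $|W|$ and $|V^\pm|\le|V|$, the same bound holds with $V$ replaced by $V^+$ or by $V^-$. I would then pick $h>0$ so small that $\eta:=ch^{\alpha}<1$; this is where the positivity of $\alpha$, i.e.\ the condition $p>d/2$, is genuinely used. With these choices the hypothesis of Lemma~\ref{lem:gen_neg} on $\sup_{0<t\le h,x,y}S(V^+,t,x,y)$ is met, and $S(V^-)$ is automatically bounded on bounded subsets of $(0,\infty)\times\Rd\times\Rd$. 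Applying Lemma~\ref{lem:gen_neg} and restricting to $t\le T$, the upper bound $(1-\eta)^{-(1+t/h)}$ becomes at most $(1-\eta)^{-(1+T/h)}$ while the lower bound $e^{-S(V^-,t,x,y)}$ is at least $e^{-cT^{\alpha}}$; together these are exactly \eqref{est:sharp_time} with constants depending on $T$, $c$, $h$.

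For the second assertion, if $V\le 0$ and the hypotheses of Proposition~\ref{prop:Zhang_most}(b) hold, then the proposition directly delivers \eqref{e:sSbi}. Since $V\le 0$, Lemma~\ref{cor:gen_neg} states that \eqref{e:sSbi} is equivalent to \eqref{est:sharp_uni}, so \eqref{est:sharp_uni} follows at once.

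I do not anticipate a substantive obstacle: both parts amount to transporting the $t$-explicit bounds on $S(V)$ through the two previously established lemmas. The only modestly delicate point is the choice of $h$ in the first part, which rests on the fact that the subcriticality condition $p>d/2$ forces $\alpha>0$, and hence $ct^{\alpha}\to 0$ as $t\to 0^+$, so the absorption threshold $\eta<1$ required by Lemma~\ref{lem:gen_neg} can be attained by taking $h$ small enough.
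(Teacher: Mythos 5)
Your proposal is correct and follows the same route the paper indicates: Proposition~\ref{prop:Zhang_most}(a) combined with Lemma~\ref{lem:gen_neg} (after choosing $h$ small enough that $\eta=ch^{1-d/(2p)}<1$) gives the local bounds, and Proposition~\ref{prop:Zhang_most}(b) combined with Lemma~\ref{cor:gen_neg} gives the global bounds when $V\le 0$. The paper merely states ``By Lemma~\ref{cor:gen_neg} and \ref{lem:gen_neg}''; you have supplied the details of exactly that argument, including the key point that $p>d/2$ makes the exponent positive so the threshold $\eta<1$ is attainable.
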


Recall that \cite[Theorem~2]{MR1642818} and \cite[Remark~(1) and (4) on p. 4]{MR1994762} yield \eqref{est:sharp_uni} for $d\geq 4$ if
$\|\Delta^{-1} V^- \|_{\infty}$ and $\|V^-\|_{d/2}$
are finite, $\|\Delta^{-1} V^+ \|_{\infty}<1$ and
$\|V^-\|_{d/2}$ is
small.
We can reduce
Proposition~\ref{prop:Zhang_most}${\rm(b)}$ to this result as follows.

\begin{lem}\label{lem:Zhang_Lisk_Sem}
The
assumptions of {\rm Proposition~\ref{prop:Zhang_most}${\rm(b)}$}
necessitate that
$d\geq 3$, $V\in L^{d/2}(\Rd)$ and $\|\Delta^{-1} |V| \|_{\infty}<\infty$.
\end{lem}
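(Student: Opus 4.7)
The plan is to verify the three conclusions separately; each reduces to an elementary observation.

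First, the dimensional constraint $d\ge 3$ is immediate: the standing assumption $p,q\in[1,\infty]$ combined with $q<d/2$ forces $d>2$.

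Second, for $V\in L^{d/2}(\Rd)$ I would invoke log-convexity of $L^r$ norms (Lyapunov's inequality). The hypothesis $q<d/2<p$ rewrites as $1/p<2/d<1/q$, so there is a unique $\theta\in(0,1)$ with $2/d=\theta/p+(1-\theta)/q$, and then
\[
\|V\|_{d/2}\leq \|V\|_p^{\theta}\,\|V\|_q^{1-\theta}<\infty.
\]

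Third, for $\|\Delta^{-1}|V|\|_\infty<\infty$ the most efficient route is to use what has already been proved in the paper: Proposition~\ref{prop:Zhang_most}${\rm(b)}$ delivers \eqref{e:sSbi}, and Lemma~\ref{l:b} then converts \eqref{e:sSbi} into \eqref{e:si}, which is exactly the desired boundedness of $\Delta^{-1}|V|$. For a self-contained alternative one splits
\[
-\Delta^{-1}|V|(x)=C_d\int_{\Rd}\frac{|V(z)|}{|z-x|^{d-2}}\,dz
\]
at $|z-x|=1$ and applies H\"older's inequality with exponent $p$ on the inside and $q$ on the outside. The hypotheses $p>d/2$ and $q<d/2$ are precisely the conditions $(d-2)p'<d$ and $(d-2)q'>d$ that make $|z-x|^{-(d-2)}$ belong to $L^{p'}$ near $x$ and to $L^{q'}$ at infinity, respectively, yielding a bound uniform in $x$.

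There is no substantive obstacle: the lemma is essentially bookkeeping whose point is to reveal Proposition~\ref{prop:Zhang_most}${\rm(b)}$ as a special case of the Berenstein/Milman--Semenov result recalled just before the statement. The only mild care needed is in the third step, in the bookkeeping of conjugate exponents so that the near-$x$ and near-infinity integrals both converge; this is settled once and for all by the two equivalences noted above.
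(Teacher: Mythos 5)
Your proof is correct, and your ``self-contained alternative'' for the third claim is precisely what the paper does: split the Newtonian potential at $|z-x|=1$ and apply H\"older's inequality with exponent $p$ on the near range and $q$ on the far range, the convergence of $\int_{|z|<1}|z|^{-(d-2)p'}dz$ and $\int_{|z|\ge1}|z|^{-(d-2)q'}dz$ being guaranteed by $p>d/2$ and $q<d/2$ respectively. The first two claims ($d\geq 3$ from $1\le q<d/2$, and $V\in L^{d/2}$ by log-convexity of $L^r$ norms) are likewise the same as the paper's, which simply labels them ``plain.''

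Your preferred route for the third claim --- invoking Proposition~\ref{prop:Zhang_most}(b) to obtain \eqref{e:sSbi} and then Lemma~\ref{l:b} to pass from \eqref{e:sSbi} to \eqref{e:si} --- is logically sound and is indeed shorter, since both ingredients are already proved. It is, however, a detour through the bridge machinery, whereas the paper's direct H\"older computation serves the lemma's stated purpose better: Lemma~\ref{lem:Zhang_Lisk_Sem} is meant to show that the hypotheses of Proposition~\ref{prop:Zhang_most}(b) are \emph{a priori} subsumed under the Liskevich--Semenov/Milman--Semenov conditions ($V\in L^{d/2}$ and bounded Newtonian potential), independently of the fact that those hypotheses also yield \eqref{e:sSbi}. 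Your route proves the inclusion only \emph{a posteriori}, via conclusions of the present paper rather than from the assumptions alone, which somewhat dilutes the point of the comparison. Since you supplied both arguments, the content is all there; just be aware of which one the lemma is really after.
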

\begin{proof}
Plainly, the assumptions of {\rm Proposition~\ref{prop:Zhang_most}${\rm(b)}$} imply
$d>2$ and $V\in L^{d/2}(\Rd)$. We now verify that \mbox{$\|\Delta^{-1} |V| \|_{\infty}<\infty$.} By H\"older's inequality,
\begin{align*}
\sup_{x\in\Rd}\int_{B(0,1)} \frac{|V(z+x)|}{|z|^{d-2}} dz\leq \||z|^{2-d}\ind_{B(0,1)}(z) \|_{p'}\, \|V \|_{p}<\infty\,,\\
\sup_{x\in\Rd}\int_{B^c(0,1)} \frac{|V(z+x)|}{|z|^{d-2}} dz\leq \||z|^{2-d}\ind_{B^c(0,1)}(z) \|_{q'}\, \|V \|_{q}<\infty\,,
\end{align*}
where $p',q'$ are the exponents conjugate to $p,q$, respectively.
\end{proof}

In what follows, we propose suitable sufficient conditions for \eqref{est:sharp_uni} and \eqref{est:sharp_time}.
We let $d_1,d_2\in \NN$ and $d=d_1+d_2$.
\begin{rem}\label{rem:obs}
\rm
The Gauss-Weierstrass kernel $g(t,x)$ in $\Rd$ can be represented as a tensor product:
\begin{align*}
g(t,x)= (4\pi t)^{-d_1/2} e^{-|x_1|^2/(4t)}\, (4\pi t)^{-d_2/2} e^{-|x_2|^2/(4t)}\,,
\end{align*}
where $x_1\in\RR^{d_1}$, $x_2\in\RR^{d_2}$ and $x=(x_1,x_2)$.
The kernels of the bridges factorize accordingly:
\begin{align*}
&\frac{g(s,x,z)\,g(t-s,z,y)}{g(t,x,y)}\\
&=\frac{(4\pi s)^{-d_1/2} e^{-|z_1-x_1|^2/(4s)}(4\pi (t-s))^{-d_1/2} e^{-|y_1-z_1|^2/(4(t-s))}}{(4\pi t)^{-d_1/2} e^{-|y_1-x_1|^2/(4t)}}\\
&\times\frac{(4\pi s)^{-d_2/2} e^{-|z_2-x_2|^2/(4s)} (4\pi (t-s))^{-d_2/2} e^{-|y_2-z_2|^2/(4(t-s))}}{(4\pi t)^{-d_2/2} e^{-|y_2-x_2|^2/(4t)}}.
\end{align*}
\end{rem}

\begin{cor}\label{rem:prod}
Let $V_1\colon \RR^{d_1}\to \RR$, $V_2\colon\RR^{d_2}\to \RR$,
and $V(x)=V_1(x_1)V_2(x_2)$, where $x=(x_1,x_2)\in \Rd$, $x_1\in \RR^{d_1}$ and $x_2\in \RR^{d_2}$.
Assume that $V_1\in L^{\infty}(\RR^{d_1})$ and $\sup_{t>0,\,x_2,y_2\in\RR^{d_2}} S(V_2,t,x_2,y_2)<\infty$.
Then \eqref{e:sSbi} holds.
\end{cor}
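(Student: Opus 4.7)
The plan is to exploit the tensor-product factorization recorded in Remark~\ref{rem:obs}. Write $x=(x_1,x_2)$, $y=(y_1,y_2)$, $z=(z_1,z_2)$ with $x_i,y_i,z_i\in\RR^{d_i}$. Remark~\ref{rem:obs} shows that the bridge kernel on $\Rd$ factorizes as
\[
\frac{g(s,x,z)\,g(t-s,z,y)}{g(t,x,y)} = B_1(s,t,x_1,y_1,z_1)\, B_2(s,t,x_2,y_2,z_2),
\]
where each $B_i(s,t,x_i,y_i,z_i)$ is the analogous ratio of Gauss-Weierstrass kernels on $\RR^{d_i}$, and in particular is a probability density in $z_i$ by the Chapman-Kolmogorov equation. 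Since $|V(z)|=|V_1(z_1)|\,|V_2(z_2)|$, Tonelli's theorem applied to the nonnegative integrand yields
\[
S(V,t,x,y) = \int_0^t \Bigl(\int_{\RR^{d_1}} B_1\, |V_1(z_1)|\, dz_1\Bigr)\Bigl(\int_{\RR^{d_2}} B_2\, |V_2(z_2)|\, dz_2\Bigr) ds.
\]

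Then I would invoke the bridge normalization $\int_{\RR^{d_1}} B_1(s,t,x_1,y_1,z_1)\,dz_1 = 1$ (the $d_1$-dimensional analogue of $T^{t,y}_s 1=1$), which bounds the inner $z_1$-integral by $\|V_1\|_{\infty}$ uniformly in $s,t,x_1,y_1$. Hence
\[
S(V,t,x,y) \le \|V_1\|_{\infty} \int_0^t \int_{\RR^{d_2}} B_2(s,t,x_2,y_2,z_2)\, |V_2(z_2)|\, dz_2\, ds = \|V_1\|_{\infty}\, S(V_2,t,x_2,y_2).
\]
Taking suprema over $t>0$ and $x,y\in\Rd$, and using the assumed finiteness of $\sup_{t>0,\,x_2,y_2\in\RR^{d_2}} S(V_2,t,x_2,y_2)$, gives \eqref{e:sSbi}.

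I anticipate no real obstacle: once Remark~\ref{rem:obs} is in place, the proof collapses to the bridge normalization applied in the variable where only an $L^{\infty}$ bound is available. The small points to verify with care are the applicability of Tonelli (immediate from nonnegativity) and the asymmetric roles of $V_1$ and $V_2$ in the hypothesis, which is natural: essential boundedness must be imposed precisely on the factor whose $S$-quantity is not controlled by assumption.
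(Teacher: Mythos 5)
Your proof is correct and follows exactly the route the paper sketches: factorize the bridge kernel via Remark~\ref{rem:obs}, bound the $z_1$-integral by $\|V_1\|_\infty$ using the Chapman--Kolmogorov normalization of the $d_1$-dimensional bridge, and then invoke the boundedness of $S(V_2)$. The paper gives only a one-line sketch; you have simply spelled out the same steps in full.
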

\begin{proof}
In estimaing $S(V,t,x,y)$ we first use the factorization of the bridges and the boundedness of $V_1$, and then the Chapman-Kolmogorov equations and the boundedness of $S(V_2)$.
\end{proof}

Let
$p, p_1, p_2\in [1,\infty]$.
\begin{defin}\label{d:tp}
We write
$f\in L^{p_1}(\RR^{d_1})\times L^{p_2}(\RR^{d_2})$ if there are $f_1\in L^{p_1}(\RR^{d_1})$ and $f_2\in L^{p_2}(\RR^{d_2})$, such that
$$
f(x_1,x_2)=f_1(x_1) f_2(x_2)\,,\qquad x_1\in\RR^{d_1}, \ x_2\in\RR^{d_2}.
$$
\end{defin}
\vspace{5pt}
\noindent
Clearly, $L^{p}(\RR^{d_1})\times L^{p}(\RR^{d_2}) \subset
L^{p}(\RR^{d_1+d_2})$, in fact
$\|f\|_p=\|f_1\|_p\|f_2\|_p$ if $f (x_1,x_2)=f_1(x_1)f_2(x_2)$.
\begin{lem}
\label{lem:Lp1_Lp2}
For $f (x_1,x_2)=f_1(x_1)f_2(x_2) \in L^{p_1}(\RR^{d_1})\times L^{p_2}(\RR^{d_2})$, $0<s<t$ and $y\in\Rd$,
we have
\begin{align*}
\| T^{t,y}_s f\|_{\infty} \leq C(d_1,p_1)\, C(d_2,p_2) \left[\frac{(t-s)s}{t}\right]^{-d_1/(2p_1)-d_2/(2p_2)} \|f_1 \|_{p_1} \|f_2 \|_{p_2}\,.
\end{align*}
\end{lem}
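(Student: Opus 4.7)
The plan is to reduce Lemma~\ref{lem:Lp1_Lp2} directly to Lemma~\ref{lem:Lp} by means of the tensor product factorization recorded in Remark~\ref{rem:obs}. Writing $x=(x_1,x_2)$, $y=(y_1,y_2)$, $z=(z_1,z_2)$ with $x_i,y_i,z_i\in\RR^{d_i}$, Remark~\ref{rem:obs} expresses the bridge density as
\[
\frac{g(s,x,z)\,g(t-s,z,y)}{g(t,x,y)} = k^{(1)}_{s,t}(x_1,z_1,y_1)\,\cdot\, k^{(2)}_{s,t}(x_2,z_2,y_2),
\]
where each $k^{(i)}_{s,t}$ is the bridge kernel of the Gauss-Weierstrass semigroup on $\RR^{d_i}$. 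Since $f(z)=f_1(z_1)f_2(z_2)$, Fubini's theorem applied to the nonnegative integrand $|f_1|\otimes|f_2|$ gives the clean factorization
\[
T^{t,y}_s f(x) \;=\; \bigl(T^{t,y_1}_s f_1\bigr)(x_1)\,\cdot\,\bigl(T^{t,y_2}_s f_2\bigr)(x_2),
\]
where on the right-hand side the bridge operators are those in dimensions $d_1$ and $d_2$, respectively.

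With this identity in hand I would invoke Lemma~\ref{lem:Lp} in each factor separately, obtaining
\[
\bigl|T^{t,y_i}_s f_i(x_i)\bigr| \;\leq\; C(d_i,p_i)\left[\frac{(t-s)s}{t}\right]^{-d_i/(2p_i)} \|f_i\|_{p_i}, \qquad i=1,2.
\]
Multiplying these two bounds produces exactly the stated estimate, since the two prefactors of the form $[(t-s)s/t]^{-d_i/(2p_i)}$ combine to give the single power $-d_1/(2p_1)-d_2/(2p_2)$. Taking the supremum over $x=(x_1,x_2)\in\Rd$ is harmless because each factor is already controlled uniformly in $x_i$.

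There is essentially no obstacle here: the whole content of the lemma is that bridges factorize along tensor products, after which the inequality is nothing but the product of two applications of Lemma~\ref{lem:Lp}. The only points that warrant a brief sanity check are the bookkeeping of the prefactors and the legitimacy of Fubini, both of which are immediate.
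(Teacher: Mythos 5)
Your proof is correct and is essentially the same approach as the paper's one-line proof, which says ``proceed as in the proof of Lemma~\ref{lem:Lp}, using Remark~\ref{rem:obs}.'' You have merely made the organization a bit more modular: instead of rerunning the H\"older/triangle-inequality argument of Lemma~\ref{lem:Lp} with the factored bridge kernel, you first record the clean operator identity $T^{t,y}_s f(x) = T^{t,y_1}_s f_1(x_1)\cdot T^{t,y_2}_s f_2(x_2)$ coming from Remark~\ref{rem:obs} and Tonelli, and then invoke Lemma~\ref{lem:Lp} twice as a black box; both routes rest on exactly the same factorization and yield the same bound.
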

\begin{proof}
We proceed as in the proof of Lemma~\ref{lem:Lp}, using Remark~\ref{rem:obs}.
\end{proof}
We
extend Proposition~\ref{prop:Zhang_most} as follows.
\begin{thm}\label{prop:most_extended}
Let $d_1, d_2\in \NN$, $d=d_1+d_2$, $V\colon \Rd\to \RR$,
$p_1, p_2\in [1,\infty]$
and
$$
\frac{d_1}{2p_1}+\frac{d_2}{2p_2}=1\,.
$$
\begin{enumerate}
\item[\rm (a)] If $r\in(p_1,\infty]$ and $V\in L^{r}(\RR^{d_1})\times L^{p_2}(\RR^{d_2})$, then
$$
\sup_{x,y\in\Rd} S(V,t,x,y)\leq c\, t^{1-d_1/(2r)-d_2/(2p_2)}\,,
$$
where $c=C(d_1,r)C(d_2,p_2) \frac{[\Gamma(1-d_1/(2r)-d_2/(2p_2))]^2}{\Gamma(2-d_1/r-d_2/p_2)} \|V_1 \|_{r}\|V_2 \|_{p_2}$.
\item[\rm(b)]
If $1\le q <p_1<r\le \infty$ and
$V\in \left[L^{q}(\RR^{d_1})\cap L^{r}(\RR^{d_1}) \right]\!\!\times L^{p_2}(\RR^{d_2})$, then
\eqref{e:sSbi} holds.
\end{enumerate}
\end{thm}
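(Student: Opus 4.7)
The plan is to mimic the proofs of Lemma~\ref{lem:Lp} and Proposition~\ref{prop:Zhang_most}, but now feeding the factorized bound of Lemma~\ref{lem:Lp1_Lp2} into the identity $S(V,t,x,y)=\int_0^t T^{t,y}_s |V|(x)\,ds$. Since $|V|(x_1,x_2)=|V_1(x_1)|\,|V_2(x_2)|$, under the hypotheses of (a) we have $|V|\in L^{r}(\RR^{d_1})\times L^{p_2}(\RR^{d_2})$, and under the hypotheses of (b), $|V|\in (L^{q}\cap L^{r})(\RR^{d_1})\times L^{p_2}(\RR^{d_2})$, with $\|\,|V_i|\,\|_{\bullet}=\|V_i\|_\bullet$.

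For part (a), I set $\alpha=d_1/(2r)+d_2/(2p_2)$; the assumption $r>p_1$ together with $d_1/(2p_1)+d_2/(2p_2)=1$ gives $\alpha<1$. Lemma~\ref{lem:Lp1_Lp2} then yields, uniformly in $x,y\in\Rd$,
\[
T^{t,y}_s |V|(x) \le C(d_1,r)\,C(d_2,p_2)\, \|V_1\|_r\, \|V_2\|_{p_2} \left[\frac{(t-s)s}{t}\right]^{-\alpha}.
\]
Integrating in $s$ via the substitution $s=tu$ and the Beta integral gives $\int_0^t [s(t-s)/t]^{-\alpha}\,ds = t^{1-\alpha}B(1-\alpha,1-\alpha)= t^{1-\alpha}\,[\Gamma(1-\alpha)]^{2}/\Gamma(2-2\alpha)$. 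Rewriting $2\alpha=d_1/r+d_2/p_2$ recovers both the explicit constant $c$ and the exponent $1-d_1/(2r)-d_2/(2p_2)$ asserted in~(a).

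For part (b), the range $t\in(0,2]$ is immediate from (a) with exponent $r$, since $1-\alpha>0$ keeps $t^{1-\alpha}$ bounded on that interval. For $t>2$, I invoke the symmetry \eqref{e:sT} in the form \eqref{e:dT} and, by symmetry of the two summands, it suffices to bound $\int_0^{t/2} T^{t,y}_s |V|(x)\,ds$ uniformly in $t>2$ and $x,y$. On this half-interval $(t-s)/t\ge 1/2$, so $[s(t-s)/t]^{-\gamma}\le 2^{\gamma}s^{-\gamma}$ for any $\gamma>0$. Splitting at $s=1$ and applying Lemma~\ref{lem:Lp1_Lp2} with exponent $r$ on $(0,1)$ and with exponent $q$ on $(1,t/2)$ yields
\[
\int_0^1 T^{t,y}_s|V|(x)\,ds \;\lesssim\; \|V_1\|_r\|V_2\|_{p_2}\int_0^1 s^{-\alpha}\,ds,
\]
\[
\int_1^{t/2} T^{t,y}_s|V|(x)\,ds \;\lesssim\; \|V_1\|_q\|V_2\|_{p_2}\int_1^{\infty} s^{-\beta}\,ds,
\]
where $\beta=d_1/(2q)+d_2/(2p_2)>1$ because $q<p_1$. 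Both integrals converge ($\alpha<1<\beta$), so the sum is bounded independently of $t>2$ and $x,y$, and combined with the $t\le 2$ estimate this delivers \eqref{e:sSbi}.

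I do not expect a serious conceptual obstacle: once Lemma~\ref{lem:Lp1_Lp2} is in hand, the argument is a two-scale mirror of Zhang's proof in Proposition~\ref{prop:Zhang_most}. The only point requiring vigilance is the bookkeeping of the strict inequalities $\alpha<1<\beta$, which however follow mechanically from the scaling identity $d_1/(2p_1)+d_2/(2p_2)=1$ and from $q<p_1<r$; and the verification that the constant produced by the Beta integral is precisely the one stated in~(a).
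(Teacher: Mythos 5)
Your proof is correct and is essentially the paper's own argument: the paper merely says to follow the proof of Proposition~\ref{prop:Zhang_most} with Lemma~\ref{lem:Lp} replaced by Lemma~\ref{lem:Lp1_Lp2}, which is exactly the two-scale Beta-integral and $s=1$-splitting scheme you carry out. The bookkeeping $\alpha<1<\beta$ via $q<p_1<r$ and the scaling identity, and the identification $2\alpha=d_1/r+d_2/p_2$ giving the stated constant, are both right.
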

\begin{proof}
We follow
the proof of Proposition~\ref{prop:Zhang_most}, replacing Lemma~\ref{lem:Lp}
by Lemma~\ref{lem:Lp1_Lp2}.\end{proof}

By Lemma~\ref{cor:gen_neg} and \ref{lem:gen_neg} we get the following conclusion.
\begin{cor}\label{cor:escb}
Under the assumptions of
{\rm Theorem~\ref{prop:most_extended}(a)},
$G$ 
satisfies the sharp local Gaussian bounds \eqref{est:sharp_time}. If $V\le 0$ and the assumptions of
{\rm Theorem~\ref{prop:most_extended}(b)}
hold, then $G$ has the sharp global Gaussian bounds \eqref{est:sharp_uni}.
\end{cor}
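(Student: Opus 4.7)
The plan is to observe that both parts of the corollary are bookkeeping consequences of Theorem~\ref{prop:most_extended} combined, respectively, with Lemma~\ref{lem:gen_neg} and Lemma~\ref{cor:gen_neg}; the analytic work has already been done in those statements.

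For part (a), I would extract from Theorem~\ref{prop:most_extended}(a) the bound $\sup_{x,y\in\Rd} S(V,t,x,y)\le c\,t^{\alpha}$ with exponent $\alpha:=1-d_1/(2r)-d_2/(2p_2)$. The hypothesis $r>p_1$ combined with the constraint $d_1/(2p_1)+d_2/(2p_2)=1$ forces $\alpha>0$, so this right-hand side tends to $0$ as $t\to 0^+$. I would then fix any $\eta\in(0,1)$ (say $\eta=1/2$) and choose $h>0$ so small that $c\,h^{\alpha}\le\eta$. Because $S$ is defined through $|V|$ and $V^{\pm}\le|V|$, we have $S(V^{+},t,x,y)\le S(V,t,x,y)$, so the first hypothesis of Lemma~\ref{lem:gen_neg}, namely $\sup_{0<t\le h,\,x,y}S(V^{+},t,x,y)\le\eta$, is verified. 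The second hypothesis, local boundedness of $S(V^{-})$ on bounded subsets of $(0,\infty)\times\Rd\times\Rd$, follows from $S(V^{-},t,x,y)\le S(V,t,x,y)\le c\,t^{\alpha}$, which is bounded on any compact $t$-interval uniformly in $x,y$. Lemma~\ref{lem:gen_neg} then delivers \eqref{gen_est}, whose constants are finite on $(0,T]$ for every fixed $T\in(0,\infty)$; this is exactly \eqref{est:sharp_time}.

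For part (b), the assumption $V\le 0$ is used together with Theorem~\ref{prop:most_extended}(b), which gives $\sup_{t>0,\,x,y\in\Rd}S(V,t,x,y)<\infty$, i.e.\ \eqref{e:sSbi}. Since $V\le 0$, Lemma~\ref{cor:gen_neg} converts \eqref{e:sSbi} directly into \eqref{est:sharp_uni}, and nothing further is needed.

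I do not anticipate a genuine obstacle: the delicate estimates sit inside Theorem~\ref{prop:most_extended} and Lemmas~\ref{cor:gen_neg}, \ref{lem:gen_neg}. The only point to be careful about is checking that $\alpha>0$ in (a) (which is where $r>p_1$, as opposed to $r\ge p_1$, is used) and noting that bounds on $S(V)$ automatically furnish bounds on $S(V^{\pm})$, so that Lemma~\ref{lem:gen_neg} can indeed be applied.
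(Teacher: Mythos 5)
Your proof is correct and follows essentially the same route as the paper: the paper's one-line justification invokes Lemma~\ref{cor:gen_neg} and Lemma~\ref{lem:gen_neg} together with the bounds from Theorem~\ref{prop:most_extended}, and your write-up simply spells out the bookkeeping (checking $\alpha>0$ from $r>p_1$, dominating $S(V^{\pm})$ by $S(V)$, choosing $h$ small, and reading off \eqref{est:sharp_time} and \eqref{est:sharp_uni}). No discrepancy with the paper.
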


Clearly, if $|U|\le |V|$, then $S(U)\le S(V)$. This may be used to
extend
the 
conclusions of Theorem~\ref{prop:most_extended} and Corollary~\ref{cor:escb}
beyond tensor products $V(x_1,x_2)=V_1(x_1)V_2(x_2)$.

\section{Examples}\label{sec:e}
Let $\ind_A$ denote the indicator function of $A$. In what follows, $G$ in \eqref{est:sharp_uni} is the Schr{\"o}dinger perturbation of $g$ by $V$.

\begin{example}\label{thm:Ld/2}
Let $d\geq 3$ and $1<p<\infty$.
For $x_1\in  \RR$, $x_2\in \RR^{d-1}$ we let
$V(x_1,x_2)=-
|x_1|^{-1/p}\ind_{|x_1|<1}\ind_{|x_2|<1}$.
Then \eqref{est:sharp_uni} holds but $V \notin L^{p}_{loc}(\Rd)$.
\end{example}
\noindent
Indeed,
$V(x_1,x_2)=V_1(x_1) V_2(x_2)$, where
\begin{align*}
V_1(x_1)&=-|x_1|^{-1/p} \ind_{|x_1|<1},\qquad x_1\in  \RR,\\
V_2(x_2)&=\ind_{|x_2|<1}, \qquad x_2\in \RR^{d-1}.
\end{align*}
Let
$$
1\le q<p_1<r<p,
$$
and
$$
p_2=\frac{d-1}{2}\frac{p_1}{p_1-1/2}.
$$
Since $d\ge 3$, $p_2>1$.
In the notation of Theorem~\ref{prop:most_extended} we have
\mbox{$d_1=1$}, \mbox{$d_2=d-1$}, and indeed
\mbox{$d_1/(2p_1)+d_2/(2p_2)=1$}.
Since $V_1\in L^r(\RR)\cap L^{q}(\RR)$ and $V_2\in L^{p_2}(\RR^{d-1})$,
the assumptions of Theorem~\ref{prop:most_extended}${\rm(b)}$ are satisfied,
and \eqref{est:sharp_uni} follows
by Corollary~\ref{cor:escb}.
Clearly, $V\notin L^{p}_{loc}(\Rd)$.

\begin{example}\label{ex:drugi}
For $d\geq 3$, $n=2,3,\ldots$, let $V_n(x)=|x_1|^{-1+1/n}\ind_{|x_1|<1} \ind_{|x_2|<1}$, where $x=(x_1,x_2)$, $x_1\in\RR$, $x_2\in \RR^{d-1}$. Let $a_n=\sup_{t>0,\, x,y\in\Rd} S(V_n,t,x,y)$,
$$
V(x)=-\sum_{n=2}^{\infty} \frac1{n^2}\frac{V_n(x)}{a_n}\,, \quad x\in \Rd.
$$
Then
\eqref{est:sharp_uni} holds but $V\notin \bigcup_{p> 1} L^p_{loc}(\Rd)$.
\end{example}
\noindent
Indeed,
$0<a_n<\infty$ by Example~\ref{thm:Ld/2}, and so
$$
\sup_{t>0,\,x,y\in\Rd} S(V,t,x,y)\leq
\sum_{n=2}^{\infty}\frac{1}{n^2} <\infty\,.
$$
This yields the global sharp Gaussian bounds.
For $p>1$ we let $m=\lceil \frac{p}{p-1} \rceil$, and we have
$m\geq 2$, $\frac{m-1}{m}p\geq 1$. Then,
$$
\int_{B(0,2)} |V(x)|^p\,dx \geq \left(\frac1{m^{2}a_m}\right)^p \int_{|x_2|<1}\int_{|x_1|<1} |x_1|^{ 
-\frac{m-1}{m} p
} \,dx_1 dx_2= +\infty\,.
$$

\begin{example}\label{ex:czwarty}
Let $d\ge 3$ and $V(x_1,x_2)=\frac{-1}{(|x_2|+1)^3}$ for $x_1\in\RR^{d-3}$, $x_2\in \RR^{3}$.
Then \eqref{est:sharp_uni} holds but $V\notin L^1(\RR^d)$.
\end{example}
\noindent
Indeed, we denote $V_2(x_2)=\frac{-1}{(|x_2|+1)^3}$, $x_2\in \Rd$, and by the symmetric rearrangement inequality \cite[Chapter~3]{MR1817225},
in dimension $d=3$ we have
\begin{align*}
0\le \Delta^{-1}V_2
\le C_{3}\int_{\RR^3} \frac{1}{|z|(|z|+1)^{3}} \,dz <\infty\,.
\end{align*}
By Lemma~\ref{l:b} and Lemma~\ref{cor:gen_neg}
$$
\sup_{t>0,\,x_2,y_2\in\RR^3} S(V_2,t,x_2,y_2)<\infty.
$$
By Corollary~\ref{rem:prod} and Lemma~\ref{cor:gen_neg} we see that \eqref{est:sharp_uni} holds. Clearly, $V\notin L^1(\RR^d)$.

\begin{cor}\label{cor:ce}
For every $d\ge 3$ there is a function $V$ such that \eqref{est:sharp_uni} holds but
$V\notin L^1(\Rd)\cup \bigcup_{p>1}L^p_{loc}(\Rd)$.
\end{cor}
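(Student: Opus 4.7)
The plan is to combine the two preceding examples additively. I denote by $V^{(1)}$ the function constructed in Example~\ref{ex:drugi} (for which \eqref{est:sharp_uni} holds but $V^{(1)} \notin \bigcup_{p>1} L^p_{\rm loc}(\Rd)$) and by $V^{(2)}$ the function constructed in Example~\ref{ex:czwarty} (for which \eqref{est:sharp_uni} holds but $V^{(2)} \notin L^1(\Rd)$), and set $V := V^{(1)} + V^{(2)}$. Since both summands are non-positive, $V \le 0$ and crucially $|V| = |V^{(1)}| + |V^{(2)}|$ pointwise; this additivity of the absolute value is exactly what makes working in the non-positive regime convenient.

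To verify \eqref{est:sharp_uni} for $V$, I would use that $(t,x,y)\mapsto S(V,t,x,y)$ is linear in $|V|$ by its definition, so
\begin{equation*}
S(V,t,x,y) = S(V^{(1)},t,x,y) + S(V^{(2)},t,x,y).
\end{equation*}
By Lemma~\ref{cor:gen_neg} applied to each of Example~\ref{ex:drugi} and Example~\ref{ex:czwarty}, we have $\sup_{t>0,\,x,y\in\Rd} S(V^{(i)},t,x,y) < \infty$ for $i=1,2$, hence the same holds for $S(V)$. Applying Lemma~\ref{cor:gen_neg} in the reverse direction to $V \le 0$ yields \eqref{est:sharp_uni} for $V$.

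For the failure of integrability, I would simply use the pointwise inequality $|V(x)| \ge \max\{|V^{(1)}(x)|, |V^{(2)}(x)|\}$ that follows from $V^{(1)}, V^{(2)} \le 0$. If $V$ were in $L^1(\Rd)$, then so would be $V^{(2)}$, contradicting Example~\ref{ex:czwarty}; if $V$ were in $L^p_{\rm loc}(\Rd)$ for some $p > 1$, then so would be $V^{(1)}$, contradicting Example~\ref{ex:drugi}. Consequently $V \notin L^1(\Rd) \cup \bigcup_{p>1} L^p_{\rm loc}(\Rd)$, as required.

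There is essentially no obstacle once one sees that the two obstructions superpose cleanly; the only bookkeeping concerns the two different coordinate decompositions of $\Rd$ used in the two examples ($\RR \times \RR^{d-1}$ for $V^{(1)}$ versus $\RR^{d-3} \times \RR^{3}$ for $V^{(2)}$), but since both define Borel functions on the same ambient $\Rd$, their sum is unambiguous and the argument above applies verbatim.
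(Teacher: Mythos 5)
Your proof is correct and follows exactly the same route as the paper, which simply says to take the sum of the functions from Example~\ref{ex:drugi} and Example~\ref{ex:czwarty}. You supply the natural verifications (additivity of $S(\cdot)$ on nonpositive summands so that Lemma~\ref{cor:gen_neg} applies, and the pointwise domination $|V|\ge\max\{|V^{(1)}|,|V^{(2)}|\}$ to transfer the non-integrability) that the paper leaves implicit.
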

\begin{proof}
Take the sum of the functions from
Example~\ref{ex:drugi} and Example~\ref{ex:czwarty}.
\end{proof}
We can have nonnegative examples, too. Namely, let $V\le 0$ be as in Corollary~\ref{cor:ce}.
Then $M=\sup_{t>0,x,y\in \Rd}S(V,t,x,y)<\infty$.
We
let $\tilde{V}= |V|/(M+1)$.
Then $\tilde{V}\ge 0$, $\tilde{V}\notin L^1(\Rd)\cup \bigcup_{p>1}L^p_{loc}(\Rd)$
and
$$
\sup_{t>0,\,x,y\in\Rd} S(\tilde{V},t,x,y)=M/(M+1)<1\,.
$$
Therefore \eqref{gen_est} holds for $\tilde{V}$ with $h=\infty$ and $\eta=M/(M+1)$, which  yields \eqref{est:sharp_uni}.

Let $d_1,d_2\in \NN$, $d=d_1+d_2$, $V_1\colon \RR^{d_1}\to \RR$, $V_2\colon\RR^{d_2}\to \RR$,
and $V(x_1,x_2)=V_1(x_1)+V_2(x_2)$, where $x_1\in \RR^{d_1}$ and $x_2\in \RR^{d_2}$.
Let $G_1(t,x_1,y_1)$, $G_2(t,x_2,y_2)$ be the 
Schr\"odinger perturbations of the Gauss-Weierstrass kernels on $\RR^{d_1}$ and $\RR^{d_2}$ by $V_1$ and $V_2$, respectively.
Then
$G(t,(x_1,x_2),(y_1,y_2)):= G_1(t,x_1,y_1) \allowbreak G_2(t,x_2,y_2)$ is the Schr\"odinger perturbation of the Gauss-Weierstrass kernel  on $\RR^d$ by $V$.
Clearly, if the sharp Gaussian estimates hold for $G_1$ and $G_2$, then they hold for $G$.
Our next example is aimed to show that such trivial conclusions are invalid for tensor products $V(x_1,x_2)=V_1(x_1)V_2(x_2)$.

\begin{example}\label{ex:nfS2}
Let $\varepsilon\in [0,1)$. For $x_1,x_2\in \RR^3$ let $V(x_1,x_2)= V_1(x_1)V_2(x_2)$, where
$$
V_1(x)=V_2(x)=-\frac{1-\varepsilon}{2}\ |x|^{-1-\varepsilon}\ \ind_{|x|<1}.
$$
Then the fundamental solutions in $\RR^3$ of
$\partial_t=\Delta+V_1$ and $\partial_t=\Delta+V_2$ satisfy \eqref{est:sharp_uni} and \eqref{est:sharp_time}, but 
that of $\partial_t=\Delta+V$
in $\RR^6$ 
satisfies neither \eqref{est:sharp_uni} nor \eqref{est:sharp_time}.
\end{example}
\noindent
Indeed, by the symmetric rearrangement inequality \cite[Chapter~3]{MR1817225}, 
\begin{align*}
0\le-\Delta^{-1}  V_1  (x)\leq
-\Delta^{-1} V_1  (0)=
\frac{1-\varepsilon}{8\pi} \int_{\{z\in\RR^3:|z|<1\}}\frac{1}{|z|} |z|^{-1-\varepsilon}\,dz= 1/2, 
\end{align*}
for all $x\in \RR^3$.
Thus, $\|\Delta^{-1}V_1\|_{\infty}=\| \Delta^{-1} V_2\|_{\infty}<\infty$. By
 Lemma~\ref{l:b} we get
\eqref{est:sharp_uni}
for the fundamental solutions  in $\RR^3$ of 
$\partial_t=\Delta+V_1$ and $\partial_t=\Delta+V_2$.
However, the fundamental solution in $\RR^6$ of
$\partial_t=\Delta+V$
fails \eqref{est:sharp_time}.
Indeed,
if we let $T\leq 1$, $a\in\RR^6$, $|a|=1$, and
$c=\int_0^1 p(s,0,a)ds$, then by \cite[Lemma~3.5]{MR1329992},
\begin{align*}
&\int_0^T \int_{\RR^6}  g(s,0,x)|V(x)|\,dxds
\geq
\int_{\{x \in\RR^6:|x|^2\leq T\}} \int_0^T g(s,0,x)ds\,|V(x)|\,dx\\
&\geq
c
\int_{\{x \in\RR^6:|x|^2\leq T\}}  \frac1{|x|^4} |V(x)|\,dx
\\
&\geq c \int_{\{x_1\in\RR^3:|x_1|^2<T/2\}}  |V_1(x_1)|
\int_{\{x_2\in\RR^3:|x_2|^2<T/2\}}
\frac{|V_2(x_2)|}{(|x_1|^2+|x_2|^2)^2} \,dx_2 dx_1
\\
&\geq  \frac{c (1-\varepsilon)}{2} \int_{\{x_1\in\RR^3:|x_1|^2<T/2\}} |V_1(x_1)|
\int_{\{x_2\in\RR^3:|x_2|^2<T/2\}}
\frac{|x_2|^{-1}}{(|x_1|^2+|x_2|^2)^2} \,dx_2 dx_1
\\
&= \frac{c (1-\varepsilon)}{2} \int_{\{x_1\in\RR^3:|x_1|^2<T/2\}}  |V_1(x_1)|
 \frac{\pi T}{|x_1|^2 (T/2+|x_1|^2)}\,dx_1 \\
&=  \pi^2 c\,T (1-\varepsilon)^2 \int_0^{\sqrt{T/2}} \frac{r^{-1-\varepsilon}}{T/2+r^2}\,dr=\infty\,.
\end{align*}
By  Lemma~\ref{l:b}, \eqref{e:sSbt} fails, and so does \eqref{est:sharp_time},
cf. Lemma~\ref{cor:gen_neg}.
Thus, the sharp Gaussian estimates may hold for the Schr\"odinger perturbations of the Gauss-Weierstrass kernels by $V_1$ and $V_2$ but fail for the Schr\"odinger perturbation of the Gauss-Weierstrass kernel by $V(x_1,x_2)=V_1(x_1)V_2(x_2)$. Considering $-V_1$ and $-V_2$ above by
the last two sentences of Section~\ref{sec:prel}, we can have a similar example for 
nonnegative perturbations, because $1/2<1$. 
Let us also remark that the sharp global Gaussian estimates may 
hold for $V(x_1,x_2)=V_1(x_1)V_2(x_2)$ but fail for $V_1$ or $V_2$. 
Indeed, it suffices to consider $V_1(x_1)=-\ind_{|x_1|<1}$ on $\RR^3$ and $V_2\equiv 1$ on $\RR$, and to apply Theorem~\ref{prop:most_extended}. We see that it is  the combined effect of the factors $V_1$ and $V_2$ that matters--as captured in Section~\ref{sec:gen_app}.

\section{Appendix}\label{sec:a}

Following \cite{MR2457489,MR3200161} we study and use the following functions
\begin{align*}
f(t)&=\sup_{x,y\in\Rd}S(V,t,x,y)\,,\qquad t\in(0,\infty),\\
F(t)&=\sup_{0<s<t}f(s)=\sup_{\substack{x,y\in\Rd\\ 0<s< t}}S(V,s,x,y)\,,\qquad t\in(0,\infty]\,.
\end{align*}
We fix $V$ and $x,y\in \Rd$. For $0<\varepsilon<t$, we consider
\begin{align*}
S(V,t-\varepsilon,x,y)=\int_0^t \int_{\Rd} \frac{g(s,x,z)g(t-\varepsilon-s,z,y)}{g(t-\varepsilon,x,y)}|V(z)|\,{\bf 1}_{[0,t-\varepsilon]}(u)\,dzds.
\end{align*}
By Fatou's lemma we get
$$
S(V,t,x,y)\le \liminf_{\varepsilon\to 0} S(V,t-\epsilon,x,y),
$$
meaning that $(0,\infty)\ni t\mapsto S(V,t,x,y)$ is lower semicontinuous on the left.
It follows that
$f$ is lower semi-continuous on the left, too.
In consequence, $f(t)\leq F(t)$ and $F(t)=\sup_{0<s\leq t} f(s)$ for $0<t<\infty$.

We next claim that $f$ is sub-additive, that is,
\begin{align}\label{ineq:S_chapm}
f(t_1+t_2)\leq f(t_1)+f(t_2)\,,\qquad t_1,\,t_2>0\,.
\end{align}
This follows from the Chapman-Kolmogorov equations for $g$.
Indeed, we have $S(V,t_1+t_2,x,y)=I_1+I_2$, where
\begin{align*}
I_1&=\int_0^{t_1} \int_{\Rd} \frac{g(s,x,z)g(t_1+t_2-s,z,y)}{g(t_1+t_2,x,y)}|V(z)|\,dzds\\
&=\int_0^{t_1} \int_\Rd\int_{\Rd} \frac{g(s,x,z)g(t_1-s,z,w)g(t_2,w,y)g(t_1,x,w)}{g(t_1+t_2,x,y)g(t_1,x,w)}|V(z)|\,dwdzds\\
&\le \int_\Rd \frac{g(t_2,w,y)g(t_1,x,w)}{g(t_1+t_2,x,y)} S(V,t_1,x,w)\,dw\le f(t_1)\,,
\end{align*}
and $I_2$ equals
\begin{align*}
&\int_{t_1}^{t_1+t_2} \int_{\Rd} \frac{g(s,x,z)g(t_1+t_2-s,z,y)}{g(t_1+t_2,x,y)}|V(z)|\,dzds
\\
&=\int_{t_1}^{t_1+t_2} \!\!\!\! \int_\Rd\int_{\Rd} \frac{g(t_1,x,w)g(s-t_1,w,z)g(t_2-(s-t_1),z,y)g(t_2,w,y)}{g(t_1+t_2,x,y)g(t_2,w,y)}|V(z)|\,dwdzds\\
&\le \int_\Rd \frac{g(t_1,x,w)g(t_2,w,y)}{g(t_1+t_2,x,y)} S(V,t_2,w,y)\,dw
\le f(t_2)\,.
\end{align*}
This yields \eqref{ineq:S_chapm}.
\begin{lem}\label{lemaF(t)}
For all $t,h>0$ we have
$f(t)
\leq
F(h)+ t\, f(h)/h.
$
\end{lem}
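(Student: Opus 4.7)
The plan is to exploit the sub-additivity \eqref{ineq:S_chapm} of $f$ via a Euclidean-division argument: write $t=nh+r$ with $n\in\{0,1,2,\ldots\}$ and $r\in[0,h)$, then iterate \eqref{ineq:S_chapm} to bound $f(t)$ by a multiple of $f(h)$ plus a remainder term controlled by $F(h)$. This is the standard way to pass from sub-additivity to a linear-growth estimate, and it fits the inequality we want because the coefficient $t/h$ is precisely what the division produces, while the residual $f(r)$ with $r<h$ is exactly what $F(h)$ bounds.

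I would first note that $f\ge 0$ and $F\ge 0$, since $S(V,\cdot,\cdot,\cdot)\ge 0$ by definition. Then I split into cases. If $t\le h$, the conclusion is immediate: $f(t)\le F(h)\le F(h)+t\,f(h)/h$. If $t>h$, choose the unique integer $n\ge 1$ and $r\in[0,h)$ with $t=nh+r$.

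For the case $r>0$, iterating \eqref{ineq:S_chapm} gives
\begin{align*}
f(t)=f(nh+r)\le n\,f(h)+f(r)\le \tfrac{t}{h}\,f(h)+F(h),
\end{align*}
where I use $n\le t/h$ together with $f(h)\ge 0$, and $f(r)\le F(h)$ because $r<h$. For the case $r=0$, i.e.\ $t=nh$, the same iteration yields $f(nh)\le n f(h)=(t/h)f(h)\le F(h)+(t/h)f(h)$. In both sub-cases the desired bound follows.

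There is no real obstacle here; the only point requiring a bit of care is that the sub-additivity statement \eqref{ineq:S_chapm} is formulated for strictly positive arguments, so the degenerate value $r=0$ must be handled separately from the generic case $r\in(0,h)$. Once that bookkeeping is done, the estimate is a direct consequence of sub-additivity and the definition of $F$.
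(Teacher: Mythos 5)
Your proof is correct and follows essentially the same route as the paper: Euclidean division of $t$ by $h$ followed by iterated sub-additivity, with the remainder handled by $F(h)$. The only cosmetic difference is that the paper chooses the remainder $\theta\in(0,h]$ (so $t=\theta+(k-1)h$), which avoids your separate treatment of the $r=0$ case, but the substance is identical.
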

\begin{proof}
Let $k\in \NN$ be such that $(k-1)h<t\leq kh$, and let $\theta=t-(k-1)h$.
Then $t=\theta+(k-1)h$, and by
\eqref{ineq:S_chapm} we get
$$
f(t)\leq f(\theta)+ t\,f(h)/h \,\leq F(h)+t\, f(h)/h\,,
$$
since $0<\theta\leq h$.
\end{proof}
\begin{cor}\label{cor:ineq_most}
$F(t)\leq F(h)+ t\, F(h)/h$ and $F(2t)\le 2F(t)$ for $t,h>0$.
\end{cor}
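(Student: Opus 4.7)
\smallskip\noindent\textbf{Proof plan for Corollary~\ref{cor:ineq_most}.}
The first inequality is a direct consequence of Lemma~\ref{lemaF(t)}. For any $s\in(0,t]$ and any $h>0$, Lemma~\ref{lemaF(t)} applied to $s$ in place of $t$ gives
\[
f(s)\le F(h)+s\,f(h)/h\le F(h)+t\,F(h)/h,
\]
where we used $f(h)\le F(h)$ (which follows from the left lower semicontinuity of $f$ recorded just before the sub-additivity discussion, giving $F(t)=\sup_{0<s\le t}f(s)$, hence $f(h)\le F(h)$) and $s\le t$. Taking the supremum over $s\in(0,t]$ yields $F(t)\le F(h)+t\,F(h)/h$.

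For the doubling estimate $F(2t)\le 2F(t)$, the naive choice $h=t$ in the inequality just proved only gives $F(2t)\le 3F(t)$, so one must invoke sub-additivity \eqref{ineq:S_chapm} directly. The plan is to split $F(2t)=\sup_{0<s\le 2t}f(s)$ into the cases $s\le t$ and $t<s\le 2t$. In the first case, $f(s)\le F(t)\le 2F(t)$ by definition of $F$. In the second case, $s/2\le t$, and sub-additivity gives
\[
f(s)=f(s/2+s/2)\le f(s/2)+f(s/2)=2f(s/2)\le 2F(t),
\]
since $s/2\in(0,t]$. Taking the supremum over $s\in(0,2t]$ completes the proof.

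There is no real obstacle: both bounds reduce to one-line manipulations of the previously established properties. The only conceptual point worth flagging is that the sharper constant $2$ in $F(2t)\le 2F(t)$ cannot be extracted from the first inequality alone and genuinely requires the Chapman--Kolmogorov based sub-additivity \eqref{ineq:S_chapm}.
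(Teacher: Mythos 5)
Your proposal is correct and follows the derivation the paper clearly intends: the first bound is obtained by taking the supremum over $s\in(0,t]$ in Lemma~\ref{lemaF(t)} (using $f(h)\le F(h)$ and $s\le t$), and the doubling bound $F(2t)\le 2F(t)$ is obtained by splitting $\sup_{0<s\le 2t}f(s)$ and applying sub-additivity \eqref{ineq:S_chapm} on the range $t<s\le 2t$. Your side remark is also apt: the first inequality with $h=t$ only yields the weaker $F(2t)\le 3F(t)$, so appealing to \eqref{ineq:S_chapm} directly is genuinely necessary.
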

\begin{proof}[Proof of Lemma~\ref{lem:gen_neg}]
The left-hand side of \eqref{gen_est} follows from \cite[pp. 467-469]{MR1978999} and Lemma~\ref{l:b}, or we can use
\cite[(41)]{MR2457489}, which follows therein from Jensen's inequality and the second displayed formula on page 252 of \cite{MR2457489}.
We now  prove the right hand side of \eqref{gen_est}. Since $G$ is increasing in $V$, we may assume that $V\ge 0$.  For $0<s<t$, $x,y\in\mathbb R^d$, we let $p_0(s,x,t,y)=g(t-s,x,y)$ and $p_n(s,x,t,y)=\int_s^t \int_{\mathbb R^d} p_{n-1}(s,x,u,z)V(z)p_0(u,z,t,y)\, dz\, du$, $n\in \NN$. Let $Q:\mathbb R\times \mathbb R\to [0,\infty)$ satisfy $Q(u,r)+Q(r,v)\leq Q(u,v)$. By \cite[Theorem 1]{MR2507445} (see also \cite[Theorem~3]{MR3000465}) if there is $0<\eta <1$ such that
\begin{equation}\label{condition1}
p_1(s,x,t,y)\leq [\eta + Q(s,t)]p_0(s,x,t,y),
\end{equation}
then
\begin{equation}\label{eq2}
\tilde p(s,x,t,y):=\sum_{n=0}^\infty p_n(s,x,t,y)\leq \Big(\frac{1}{1-\eta}\Big)^{1+\frac{Q(s,t)}{\eta}}p_0(s,x,t,y)\,. \end{equation}
Corollary \ref{cor:ineq_most} and the assumptions of the lemma imply that (\ref{condition1}) is satisfied with $
\eta=F(h)< 1$ and
$Q(s,t)=(t-s)F(h)/h
$. Since $G(t,x,y)=\tilde p(0,x,t,y)$, the proof of \eqref{gen_est} is complete (see also \cite[(17)]{MR2457489}).
\end{proof}
\begin{proof}[Proof of Lemma~\ref{cor:gen_neg}]
Let $V\le 0$. By the proof of Theorem~1.1(a)
at the bottom of p. 468 in \cite{MR1978999}, the boundedness of $S(V,t,x,y)$ is necessary and sufficient for \eqref{est:sharp_uni}.
In particular, by the displayed formula proceeding \cite[(3.1)]{MR1978999}
the boundedness of $S(V,t,x,y)$ is sufficient for \eqref{est:sharp_uni}.
Alternatively we can apply Jensen inequality to the second displayed formula on p. 252 in \cite{MR2457489}.
The first part of Lemma~\ref{cor:gen_neg} is proved.
The second part is obtained in the same way, by
restricting the considerations, and
the transition kernel, to bounded time interval.
\end{proof}

As a consequence of Corollary~\ref{cor:ineq_most} we obtain the following result.
\begin{cor}\label{prop:lower_exp}
Let $V\leq 0$ and $T>0$.
Then \eqref{est:sharp_time} holds
if and only if
\begin{align}\label{ineq:lower_exp}
C e^{-ct} g(t,x,y)\leq G(t,x,y) \,,\qquad t>0,\,x,y\in\Rd\,,
\end{align}
for some constants $C>0$, $c\geq 0$. In fact we can take
$$\ln C=-\sup_{\substack{x,y\in\Rd\\ 0<t\leq T}}S(V,t,x,y) \qquad {\rm and}\qquad c=\frac1T \sup_{x,y\in\Rd} S(V,T,x,y)\,.$$
\end{cor}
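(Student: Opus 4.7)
The plan is to assemble the corollary from three ingredients already developed in the Appendix: the characterization of (\ref{est:sharp_time}) via bridges potential-boundedness (Lemma \ref{cor:gen_neg}), the explicit exponential lower bound on $G/g$ in terms of $S(V^-)$ (Lemma \ref{lem:gen_neg}), and the linear-in-time majorization of $f$ by $F$ (Lemma \ref{lemaF(t)}). With $V\le 0$ in force, $V^+\equiv 0$ and $|V|=V^-$, so the pointwise functional $S(V,\cdot)$ coincides with $S(V^-,\cdot)$ and the hypotheses on $V^+$ in Lemma \ref{lem:gen_neg} are automatic with $\eta=0$ and any $h>0$.

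For the implication (\ref{ineq:lower_exp})$\Rightarrow$(\ref{est:sharp_time}), on $t\in(0,T]$ the asserted bound gives $G(t,x,y)\ge C e^{-cT} g(t,x,y)$, so one takes $c_1=Ce^{-cT}$. The matching upper bound is free of any quantitative input: since $V\le 0$ and $G\ge 0$, the Duhamel identity
\[
G(t,x,y)=g(t,x,y)+\int_0^t\int_{\Rd} G(s,x,z)V(z)g(t-s,z,y)\,dz\,ds
\]
shows that the integral term is nonpositive, hence $G\le g$ and $c_2=1$ works.

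For the converse, assume (\ref{est:sharp_time}). By Lemma \ref{cor:gen_neg} this forces $F(T)=\sup_{0<s\le T,\,x,y}S(V,s,x,y)<\infty$, which in particular means $S(V^-)$ is bounded on bounded subsets of $(0,\infty)\times\Rd\times\Rd$. Applying Lemma \ref{lem:gen_neg} then yields, for all $t>0$ and $x,y\in\Rd$, the lower estimate
\[
G(t,x,y)\ge e^{-S(V^-,t,x,y)}g(t,x,y)=e^{-S(V,t,x,y)}g(t,x,y).
\]
Now I would invoke Lemma \ref{lemaF(t)} with $h=T$ to control $S(V,t,x,y)\le f(t)\le F(T)+t\,f(T)/T$ for every $t>0$. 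Combining the last two displays gives
\[
G(t,x,y)\ge e^{-F(T)}\,e^{-f(T)\,t/T}\,g(t,x,y),
\]
which is exactly (\ref{ineq:lower_exp}) with $\ln C=-F(T)=-\sup_{0<s\le T,\,x,y}S(V,s,x,y)$ and $c=f(T)/T=T^{-1}\sup_{x,y}S(V,T,x,y)$, the values claimed in the statement. There is no real obstacle once one has the subadditivity-driven bound in Lemma \ref{lemaF(t)}; the main thing to check carefully is that the constants line up with those in the statement, where $c$ uses $f(T)$ rather than $F(T)$, but this is precisely what Lemma \ref{lemaF(t)} delivers.
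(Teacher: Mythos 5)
Your proof is correct and follows essentially the same route as the paper: the forward implication is treated as routine (with $G\le g$ supplied by the Duhamel identity since $V\le 0$), and the converse chains Lemma \ref{cor:gen_neg} (to get $F(T)<\infty$), Lemma \ref{lem:gen_neg} (for $G/g\ge e^{-S(V,t,x,y)}$), and Lemma \ref{lemaF(t)} with $h=T$ (to bound $S(V,t,x,y)\le f(t)\le F(T)+tf(T)/T$), landing on the same constants $\ln C=-F(T)$ and $c=f(T)/T$. You spell out slightly more than the paper (the Duhamel argument for $G\le g$ and the verification that $S(V^-)$ is locally bounded so Lemma \ref{lem:gen_neg} applies), but these are exactly the steps the paper leaves implicit, so the two proofs coincide.
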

\begin{proof}
Obviously,
\eqref{ineq:lower_exp} implies \eqref{est:sharp_time} for every fixed $T>0$. Conversely, if \eqref{est:sharp_time} holds for fixed $T>0$,
then by Lemma~\ref{lem:gen_neg} and \ref{lemaF(t)} we have
$$\frac{G(t,x,y)}{g(t,x,y)}\geq
e^{-S(V,t,x,y)}\ge
e^{-f(t)}\geq e^{-F(T)} e^{-tf(T)\slash T}.$$
\end{proof}
\noindent
We note in passing that the above proof shows that \eqref{est:sharp_time} is determined by the behavior of $\sup_{x,y\in\Rd}S(V,t,x,y)$ for small $t>0$.
We end our discussion by recalling the connection of $G$ to $\Delta+V$ aforementioned in Abstract.
As it is well known, and can be directly checked by using the Fourier transform or by arguments of the semigroup theory \cite[Section~4]{BBS},
\begin{align*}
\int_s^{\infty}\int_{\Rd} g(u-s,x,z)\Big[\partial_u\phi(u,z)+\Delta \phi(u,z)\Big] dzdu=-\phi(s,x)\,,
\end{align*}
for all $s\in\RR$, $x\in\Rd$ and  for all $\phi\in C_c^{\infty}(\RR\times\Rd)$, the smooth compactly supported test functions on space-time.
Similarly, if $V$ satisfies the assumptions of Lemma~\ref{lem:gen_neg}, then by \cite[Theorem~1.1]{MR1978999}
for all $s\in\RR$, $x\in\Rd$, $\phi\in C_c^{\infty}(\RR\times\Rd)$,
\begin{align*}
\int_s^{\infty}\int_{\Rd} G(u-s,x,z)\Big[\partial_u\phi(u,z)+\Delta \phi(u,z) +V(z)\phi(u,z)\Big] dzdu=-\phi(s,x)\,.
\end{align*}
We refer to \cite[Lemma~4]{MR3000465} for a general approach to such identities.

\section*{Acknowledgement}
Jacek Dziuba{\'n}ski was supported by the Polish National Science Center
(Narodowe Centrum Nauki) grant DEC-2012/05/B/ST1/00672.
Karol Szczypkowski was partially supported by IP2012 018472
and by the German Science Foundation (SFB 701).

\def\polhk#1{\setbox0=\hbox{#1}{\ooalign{\hidewidth
  \lower1.5ex\hbox{`}\hidewidth\crcr\unhbox0}}}
  \def\polhk#1{\setbox0=\hbox{#1}{\ooalign{\hidewidth
  \lower1.5ex\hbox{`}\hidewidth\crcr\unhbox0}}}

\end{document}